\numberwithin{equation}{section}
\theoremstyle{plain}
\newtheorem{Th}{Theorem}[section]
\newtheorem{Lemma}[Th]{Lemma}
\newtheorem{Cor}[Th]{Corollary}
 \theoremstyle{definition}
\newtheorem{Rem}[Th]{Remark}
\newtheorem{?}[Th]{Problem}
\begin{document}

\title[Conformal bounds for the first eigenvalue]{Conformal bounds for the first eigenvalue of the $\left(p,q\right)$-Laplacian system}

\author[M. Habibi Vosta Kolaei \and S. Azami]{Mohammad Javad Habibi Vosta Kolaei \and Shahroud Azami}

\address{ Department of pure Mathematics,  Faculty of Science,
Imam Khomeini \linebreak International University,
Qazvin, Iran.  }

\email{MJ.Habibi@Edu.ikiu.ac.ir}
\email{azami@Sci.ikiu.ac.ir}

\subjclass[2010]{53C21, 58C40}

 \keywords{Eigenvalue, The $\left(p,q\right)$-Laplacian system, geometric estimate, Riemannian metrics}

\begin{abstract}
Consider $\left(M,g\right)$ as an $m$-dimensional compact connected Riemannian manifold without boundary. In this paper, we investigate the first eigenvalue $\lambda_{1,p,q}$ of the $\left(p,q\right)$-Laplacian system on $M$. Also, in the case of $p,q >n$ we will show that for arbitrary large $\lambda_{1,p,q}$ there exists a Riemannian metric of volume one conformal to the standard metric of $\mathbb{S}^{m}$.
\end{abstract}

\maketitle
\section{Introduction}
Finding bounds of the eigenvalue for the Laplacian on a given manifold is a key aspect in Riemannian geometry. Aa an example, studying eigenvalues that appear as solutions of the Dirichlet or Neumann boundary value problems for curvature functions, is interesting topic in geometric analysis. In recent years, because of the theory of self-adjoint operators, the spectral properties of linear Laplacian studied extensively. As an important example, mathematicians generally are interested in the spectrum of the Laplacian on compact manifolds with or without boundary or noncompact complete manifolds due to in these two cases the linear Laplacians can be uniquely extended to self-adjoint operators (see \cite{ga, ga1}).\\
Since the study of the properties of spectrum of Laplacian (specially in Dirichlet \linebreak condition) in infinitely stretched regions has applications in elasticity, \linebreak electromagnetism and quantum physics, it attracts attention of many \linebreak mathematicians and physicists. Recently Mao has proved the existence of discrete spectrum of linear Laplacian on a class of $4$-dimentional rotationally symmetric \linebreak quantum layers, which are noncompact noncomplete manifolds in \cite{ma1}. \\
Consider $M$ as a compact, complete, simply connected Riemannian \linebreak manifold. Let $u: M\longrightarrow \mathbb{R}$ be a smooth function on $M$ or $u \in W^{1,p}\left(M\right)$ where $W^{1,p}\left(M\right)$ is the Sobolev space. The $p$-Laplacian of $u$ for $1<p<\infty$ is defined as
\begin{align}\label{yek}
 \Delta_{p} u &= div( |\nabla u|^{p-2} \nabla u )\\
  &= | \nabla u |^{ p-2} \Delta u + (p-2)|\nabla u|^{p-4} ( Hess \,u)( \nabla u, \nabla u ), \nonumber
\end{align}
where
\begin{align*}
\left(Hess \, u\right)\left(X,Y\right) &= \nabla \left(\nabla u\right)\left(X,Y\right)\\
&= X.\left(Y.u\right) - \left(\nabla_{X}Y\right).u \quad X,Y \in \chi\left(M\right).
\end{align*}
Although the regularity theory of the $p$-Laplacian is very different from the usual Laplacian, many of the estimates for the first eigenvalue of the Laplacian \linebreak (for example for $p=2$) can be generalized to general $p$. As an important \linebreak example in \cite{ma}, you can find remarkable results in a case of closed manifolds with bounded Ricci curvature from below by $\left(m-1\right)K$ where $K>0$. The special case $K=0$ and general case $ K \in \mathbb{R}$ are studied in \cite{val} and \cite{na}, respectively.\\
Consider $g$ as a Riemannian metric on $M$. The conformal class of $g$ defined as
\begin{align*}
[g] = \lbrace fg | f \in C^{\infty}\left(M\right), f>0 \rbrace,
\end{align*}
also
\begin{align*}
G\left(n\right) = \lbrace \gamma \in Diff\left(\mathbb{S}^{n}\right) | \gamma^{*}can \in [can] \rbrace,
\end{align*}
for arbitrary natural $n$, denote the group of conformal diffeomorphisms of $\left(\mathbb{S}^{n}, can\right)$. It was proved before, for $n$ big enough the set
\begin{align*}
I_{n}\left(M,[g]\right) = \lbrace \phi : M \rightarrow \mathbb{S}^{n} | \phi^{*}can \in [g] \rbrace,
\end{align*}
of conformal immersions from $\left(M,g\right)$ to $\left(\mathbb{S}^{n}, can\right)$ is nonempty.
The spectrum of eigenvalues of geometric operators were studied before. As an example, for $m$-dimensional closed connected Riemannian manifold $M$ with metric $g$
\begin{align*}
Spec\left(g\right) = \lbrace 0 = \lambda_{0}\left(g\right) < \lambda_{1}\left(g\right) \leq \lambda_{2}\left(g\right) \leq ... \leq \lambda_{k}\left(g\right) \leq ... \rbrace,
\end{align*}
where $\lambda_{k}\left(g\right)$ denotes the $k$-th eigenvalue of Laplace operator. Furthermore,
\begin{align*}
\lambda_{k}^{c}\left(M, [g]\right) = \sup_{\tilde{g} \in [g]} \lambda_{k}\left(\tilde{g}\right) = \sup \lbrace \lambda_{k}\left(\tilde{g}\right)V\left(\tilde{g}\right)^{\frac{2}{m}}\rbrace,
\end{align*}
where $\tilde{g}$ is the metric conformal to $g$ and $V\left(\tilde{g}\right)$ is the volume element associated to $\tilde{g}$.\\
The conformal bound for the first eigenvalue of $p$-Laplacian system (\ref{yek}) was studied before in \cite{ma2}.
\begin{Th}[{\bf Matei \cite{ma2}}]
Let $M$ be an $m$-dimensional compact manifold and $1 < p \leq m$. If $g$ denotes the Riemannian metric on $M$ and $ n \in \mathbb{N}$ then
\begin{align*}
\lambda_{1,p}\left(M\right) \leq m^{\frac{p}{2}}\left(n+1\right)^{|\frac{p}{2}-1|}V_{n}^{c}\left(M,[g]\right)^{\frac{p}{m}},
\end{align*}
where
\begin{align*}
V_{n}^{c}\left(M,[g]\right) = \inf_{\phi \in I_{n}\left(M,[g]\right)} \sup_{\gamma \in G\left(n\right)} {\rm vol}\left(M, \left(\gamma \circ \phi \right)^{*}can \right).
\end{align*}
\end{Th}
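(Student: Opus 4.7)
The approach is a Hersch-type balancing argument adapted to the nonlinear $p$-Laplacian. Fix a conformal immersion $\phi \in I_n(M,[g])$ (which exists once $n$ is large enough), and let $X_0, X_1, \ldots, X_n$ denote the restrictions to $\mathbb{S}^n$ of the standard coordinate functions on $\mathbb{R}^{n+1}$. The key preparatory step is to find $\gamma \in G(n)$ such that, writing $F := \gamma \circ \phi$ and $Y_i := X_i \circ F$, every pullback $Y_i$ is admissible in the variational characterization
\[
\lambda_{1,p}(g) \int_M |u|^p \, dv_g \leq \int_M |\nabla u|^p \, dv_g, \qquad \text{valid whenever } \int_M |u|^{p-2} u \, dv_g = 0.
\]
Applying this to each $Y_i$ and summing over $i = 0, \ldots, n$ then reduces everything to pointwise estimates on $\sum_i |\nabla Y_i|^p$ and $\sum_i |Y_i|^p$.

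For the conformal estimate, write $F^* \mathrm{can} = f g$. Differentiating the pointwise identity $\sum_i Y_i^2 \equiv 1$ gives $\sum_i Y_i \, dY_i = 0$, and tracing $F^*\mathrm{can}$ yields $\sum_i |\nabla Y_i|_g^2 = \mathrm{tr}_g(F^*\mathrm{can}) = m f$. The power mean inequality then produces
\[
\sum_{i=0}^{n} |\nabla Y_i|^p \leq (n+1)^{\max(1-p/2,\,0)} (mf)^{p/2},
\]
and, symmetrically, since $|Y_i| \leq 1$,
\[
\sum_{i=0}^{n} |Y_i|^p \geq (n+1)^{-\max(p/2-1,\,0)}.
\]
These two exponents of $(n+1)$ add to exactly $|p/2 - 1|$. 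The remaining $f$-integral is controlled by Hölder's inequality with conjugate exponents $m/p$ and $m/(m-p)$: since $\mathrm{vol}(M, F^* \mathrm{can}) = \int_M f^{m/2} \, dv_g$, one obtains
\[
\int_M f^{p/2} \, dv_g \leq \mathrm{vol}(M, F^*\mathrm{can})^{p/m} \, \mathrm{vol}(M,g)^{1 - p/m}.
\]
Assembling these estimates (with the implicit normalization $\mathrm{vol}(M,g) = 1$ in the scale-invariant bound) and taking the infimum over $\phi$ followed by the supremum over $\gamma$ converts $\mathrm{vol}(M, F^*\mathrm{can})^{p/m}$ into $V_n^c(M,[g])^{p/m}$, giving the stated inequality.

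The \emph{main obstacle} is the preparatory step: producing a $\gamma$ that places all the $Y_i$'s simultaneously in the admissible class. For $p = 2$ this is Hersch's classical lemma, proved by a Brouwer-degree argument applied to the linear center-of-mass map $\xi \mapsto \int_M F_\xi \, dv_g$, where $\xi \in B^{n+1}$ parametrizes $G(n)$ and as $\xi \to \partial B^{n+1}$ the associated conformal transformation concentrates the measure at a single point of $\mathbb{S}^n$. For general $p \neq 2$ one must replace the linear average by the nonlinear centering $\xi \mapsto \int_M |F_\xi|^{p-2} F_\xi \, dv_g$ and repeat the degree analysis, checking that continuity on the open ball together with the concentration behaviour at the boundary force a zero in the interior. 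Verifying this topological nonlinear balancing is the crux of the proof; once in place, the rest is the power-mean and Hölder chain described above.
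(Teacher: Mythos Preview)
The paper does not supply its own proof of this statement: it is quoted in the introduction as a prior result of Matei \cite{ma2}, and the paper's contributions are the $(p,q)$-extensions in Sections~3--5. That said, your outline is precisely the Hersch--Matei argument, and it matches the template the paper itself follows when proving its own results (Lemma~\ref{lemdo} and Theorem~\ref{set} for the single-function $(p,q)$-equation, Lemma~\ref{lemyek} and Theorem~\ref{yet} for the system). In those proofs the authors carry out exactly the power-mean and H\"older chain you describe: the pointwise estimates $\sum_i |d\psi_i|^p \le (n+1)^{\max(1-p/2,0)}|d\psi|^p$ and $\sum_i|\psi_i|^p \ge (n+1)^{-\max(p/2-1,0)}$, then $\int_M|d\psi|^p \le \bigl(\int_M|d\psi|^m\bigr)^{p/m}$, and finally the identification $\int_M|d\psi|^m = m^{m/2}\,\mathrm{vol}(M,\psi^*\mathrm{can})$ coming from $\psi^*\mathrm{can} = \frac{|d\psi|^2}{m}g$.

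One point worth flagging: you correctly isolate the nonlinear balancing step --- producing $\gamma \in G(n)$ with $\int_M |Y_i|^{p-2}Y_i\,dv_g = 0$ for all $i$ --- as the genuine crux, and you sketch its proof via a degree argument on $\xi \mapsto \int_M |F_\xi|^{p-2}F_\xi\,dv_g$. The present paper, by contrast, simply asserts the existence of suitable $\gamma$ (and $\delta$) at the opening of Lemma~\ref{lemyek} without any justification, deferring implicitly to \cite{ma2}. So your proposal is, if anything, more complete on this point than the paper's own treatment of its analogous lemmas. A minor wording quibble: at the end you write ``infimum over $\phi$ followed by the supremum over $\gamma$'', but of course the logic is the reverse --- for each fixed $\phi$ one bounds $\mathrm{vol}(M,(\gamma_\phi\circ\phi)^*\mathrm{can})$ by the supremum over $\gamma$, and only then takes the infimum over $\phi$ to land on $V_n^c(M,[g])$.
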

\section{The $\left(p,q\right)$-Laplacian system}
The $\left(p,q\right)$-elliptic quasilinear system is defined as
\begin{equation}\label{se}
\left\{
\begin{array}{lr}
-\Delta_{p} u =+\lambda |u|^\alpha |v|^\beta v   & \text{in M }, \\
-\Delta_{q} v =+ \lambda |u|^\alpha |v|^\beta u  & \text{in M }, \\
 u=v=0 \quad \text{(Dirichlet)}\quad \text{or} \quad \nabla_{\delta} u= \nabla_{\delta} v =0 \quad \text{(Neumann)} &  \text{on}\  \partial \text{M},
\end{array} \right.
\end{equation}
where $\delta$ is the outward normal on $\partial M$, $ p>1$ , $ q>1$ and $ \alpha , \beta $ are real numbers such that
\begin{align*}
 \alpha >0 ,\, \beta>0 , \qquad \frac{\alpha +1}{p} + \frac{\beta +1}{q} = 1.
 \end{align*}
In this situation $\lambda$ is called an eigenvalue of system (\ref{se}) and $(u,v)$ are eigenfunctions corresponding to $\lambda$.\\
In the term of the first nontrivial eigenvalue of  the $(p,q)$-elliptic quasilinear system (\ref{se}), the first Neumann eigenvalue is defined as
\begin{align*}
\mu_{1,p,q} = inf \Big\lbrace \frac{1}{\mathcal{A}}\left(\frac{\alpha +1}{p} \int_{M} |\nabla u|^{p}  + \frac{\beta +1}{q} \int_{M} |\nabla v|^{q}\right) ; \quad u,v \in W^{1,p}\left(M\right) \setminus \lbrace 0 \rbrace, \mathcal{B}=\mathcal{E} =0 \Big\rbrace,
\end{align*}
where
\begin{align*}
\mathcal{A} =\int_{M}|u|^{\alpha +1}|v|^{\beta +1},
\end{align*}
and also
\begin{align*}
\mathcal{B}= \int_{M} \left(u|u|^{p-2} + |u|^{\alpha}|v|^{\beta}v\right),
\end{align*}
\begin{align*}
\mathcal{E}=\int_{M} \left(v|v|^{q-2} + |u|^{\alpha}|v|^{\beta}u\right).
\end{align*}
N. Zographopoulos in \cite{zog} has discussed the existence and uniqueness of the \linebreak solution of the $(p,q)$-elliptic quasilinear system (\ref{se}). This type of systems have been found in different cases in physics. For example to the study of transport of electron \linebreak temperature in a confined plasma and also to the study of electromagnetic \linebreak phenomena in nonhomogeneous super conductors, you can see \cite{brown, dan}. Also for more details in electrochemistry and nuclear reaction, you can find useful results in \cite{choi} or \cite{con}, respectively.\\
Let $\left(M,g\right)$ be an $m$-dimensional compact Riemannian manifold. The first Dirichlet eigenvalue of the system (\ref{se}) is defined as
\begin{align*}
\lambda_{1,p,q}\left(M\right) = \inf_{u,v \neq 0} \Big\lbrace \frac{1}{\int_{M}|u|^{\alpha +1}|v|^{\beta +1}dv} &\Big[\frac{\alpha +1}{p}\int_{M}|\nabla u|^{p}dv + \frac{\beta +1}{q}\int_{M}|\nabla v|^{q}dv \Big] \Big \rbrace,
\end{align*}
where
\begin{align*}
\left(u,v\right) \in W_{0}^{1,p}\left(M\right) \times W_{0}^{1,q}\left(M\right) \setminus \lbrace 0 \rbrace.
\end{align*}
As an example the second author has studied the first eigenvalue of the system (\ref{se}) in \cite{az}.
In this paper by inproving methods from Matei \cite{ma2}, we are going to study the first Dirichlet eigenvalue of the system (\ref{se}).\\
\section{The first case, $p,q \leq m$}
In this section we will prove that
\begin{Th}\label{yet}
Consider $M$ as an $m$-dimensional compact Riemannian manifold and also $1 < p,q \leq m$. If $\lambda_{1,p,q}$ denotes the first eigenvalue of the $\left(p,q\right)$-Laplacian system (\ref{se}) and $p \geq q$, then for arbitrary natural $n$ we get
\begin{itemize}
\item
If $p,q \geq 2$, then
\begin{align*}
\lambda_{1,p,q}\left(M\right) \leq \left(n+1\right)^{\frac{1}{2}p^{2}}m^{\frac{p}{2}}\left(V_{n}^{c}\left(M,[g]\right)\right)^{\frac{p}{m}}.
\end{align*}
\item
If $1< q,p < 2$, then
\begin{align*}
\lambda_{1,p,q}\left(M\right) \leq \left(n+1\right)^{-\frac{1}{2}q\left(q+1\right)} m^{\frac{q}{2}}\left(V_{n}^{c}\left(M,[g]\right)\right)^{\frac{q}{m}}.
\end{align*}
\item
If $1< q< 2\leq p$ then
\begin{align*}
\lambda_{1,p,q}\left(M\right) \leq \left(n+1\right)^{\frac{1}{2}\left(p^{2}-q\right)}m^{\frac{q}{2}}\left(V_{n}^{c}\left(M,[g]\right)\right)^{\frac{q}{m}}.
\end{align*}
\end{itemize}
\end{Th}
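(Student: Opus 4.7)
The plan is to adapt Matei's conformal-immersion strategy from the scalar $p$-Laplacian (as quoted above) to the coupled $(p,q)$ system. The starting point is the variational bound
\begin{equation*}
\lambda_{1,p,q}(M)\int_M |u|^{\alpha+1}|v|^{\beta+1}\,dv_g \;\leq\; \frac{\alpha+1}{p}\int_M |\nabla u|^p\,dv_g + \frac{\beta+1}{q}\int_M |\nabla v|^q\,dv_g,
\end{equation*}
valid for every admissible test pair $(u,v)$. I would test against the diagonal choice $u=v=\tilde\phi_i$, where $\tilde\phi=\gamma\circ\phi\colon M\to\mathbb{S}^n\subset\mathbb{R}^{n+1}$ is built from a conformal immersion $\phi\in I_n(M,[g])$ and a Möbius transformation $\gamma\in G(n)$, and $\tilde\phi_i$ is the $i$-th ambient coordinate. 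The role of $\gamma$ is Hersch-type: a topological degree argument on $G(n)$ should produce a $\gamma$ for which every $\tilde\phi_i$ satisfies the orthogonality conditions needed to be admissible for the coupled eigenvalue problem.

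Having fixed such a $\tilde\phi$, I would sum the variational inequality over $i=1,\dots,n+1$ and exploit the two structural identities of a conformal immersion into the unit sphere:
\begin{equation*}
\sum_{i=1}^{n+1}\tilde\phi_i^{2}\equiv 1,\qquad \sum_{i=1}^{n+1}|\nabla\tilde\phi_i|^{2}=m\rho,
\end{equation*}
where $\tilde\phi^{*}\mathrm{can}=\rho\,g$. The first identity, together with the power-mean inequality, gives the pointwise lower bound $\sum_i|\tilde\phi_i|^{r}\geq(n+1)^{1-r/2}$ for $r=\alpha+\beta+2>2$, hence a lower bound of the form $(n+1)^{-(\alpha+\beta)/2}\mathrm{vol}(M,g)$ on the integrated denominator. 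The second identity yields $\sum_i|\nabla\tilde\phi_i|^{p}\leq D_p(m\rho)^{p/2}$ and $\sum_i|\nabla\tilde\phi_i|^{q}\leq D_q(m\rho)^{q/2}$, where $D_p=1$ if $p\geq 2$ and $D_p=(n+1)^{1-p/2}$ if $p<2$, with $D_q$ defined analogously. The three bullets in the statement correspond exactly to the three possibilities for the pair $(D_p,D_q)$ allowed by the hypothesis $p\geq q$.

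To convert $\int_M\rho^{p/2}\,dv_g$ and $\int_M\rho^{q/2}\,dv_g$ into conformal volumes, I would invoke H\"older's inequality with exponents $m/p$ (respectively $m/q$), using $p,q\leq m$ and the identity $dv_{\tilde\phi^{*}\mathrm{can}}=\rho^{m/2}\,dv_g$, obtaining
\begin{equation*}
\int_M\rho^{p/2}\,dv_g\;\leq\;\mathrm{vol}\bigl(M,\tilde\phi^{*}\mathrm{can}\bigr)^{p/m}\mathrm{vol}(M,g)^{1-p/m}.
\end{equation*}
Normalising $\mathrm{vol}(M,g)=1$ and then passing first to the supremum over $\gamma\in G(n)$ and then to the infimum over $\phi\in I_n(M,[g])$ replaces these pullback volumes by $V_n^{c}(M,[g])^{p/m}$ and $V_n^{c}(M,[g])^{q/m}$. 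Using the coupling $(\alpha+1)/p+(\beta+1)/q=1$ together with $p\geq q$ to absorb the subdominant term, and the crude estimate $\alpha+\beta\leq p^{2}$ (which follows from $\alpha+1<p$ and $\beta+1<q\leq p$, with analogous inequalities in the other two regimes), one consolidates the accumulated $(n+1)$-factors into the clean exponents $\frac{p^{2}}{2}$, $-\frac{q(q+1)}{2}$ and $\frac{p^{2}-q}{2}$ listed in the three cases.

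The main technical obstacle I anticipate is the Hersch-type construction of $\gamma$. Unlike the scalar situation, here the admissibility of each $\tilde\phi_i$ requires the simultaneous vanishing of the two nonlinear integrals $\mathcal{B}$ and $\mathcal{E}$, and one must check that the associated map on the finite-dimensional group $G(n)$ has a zero of the required degree. Once that selection is secured, the rest is careful bookkeeping: applying the power-mean and H\"older inequalities in the correct order and tracking the exponents across the three regimes determined by whether each of $p,q$ lies above or below the critical value $2$.
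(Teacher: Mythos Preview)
Your overall strategy is sound and, in the main estimate, cleaner than the paper's. The substantive methodological difference is this: the paper does \emph{not} take the diagonal choice $u=v=\tilde\phi_i$. Instead it picks \emph{two} conformal diffeomorphisms $\gamma,\delta\in G(n)$, sets $\psi=\gamma\circ\phi$, $\eta=\delta\circ\phi$, and uses as test pairs the coordinate functions $(\check\psi_i,\check\eta_i)$ after passing to decreasing rearrangements in the index $i$. The point of the rearrangement is to make Chebyshev's sum inequality applicable, which separates the mixed denominator $\sum_i|\check\psi_i|^{\alpha+1}|\check\eta_i|^{\beta+1}$ into a product of a $\psi$-sum and an $\eta$-sum; each factor is then handled by Jensen's inequality exactly as in Matei's scalar argument. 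Your diagonal choice collapses the denominator to $\sum_i|\tilde\phi_i|^{\alpha+\beta+2}$ and dispenses with both the rearrangement and Chebyshev entirely, and in fact produces a tighter intermediate exponent $(\alpha+\beta)/2\le (p-2)/2$ before you weaken it to match the stated $p^2/2$. After this point the two arguments coincide (H\"older with exponent $m/p$, conformal-volume identity, sup over $\gamma$ then inf over $\phi$).

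Where the paper's extra machinery may matter is precisely the step you flag as the main obstacle. With the diagonal choice and a \emph{single} $\gamma$, the admissibility constraints $\mathcal{B}=0$ and $\mathcal{E}=0$ become two distinct families of $n+1$ conditions on the same functions $\tilde\phi_i$ (one involving $|\tilde\phi_i|^{p-2}\tilde\phi_i$, the other $|\tilde\phi_i|^{q-2}\tilde\phi_i$), and a single Hersch--type degree argument on $G(n)$ does not obviously kill both. The paper's two-transform setup $(\gamma,\delta)$ gives twice the freedom and decouples the $u$-side from the $v$-side; the paper does not spell out the degree argument either (it simply asserts ``there exist $\gamma,\delta\in G(n)$''), but its framework is at least dimensionally consistent with what needs to be achieved. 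If you want to keep your simpler diagonal estimate, you should either verify that a single $\gamma$ can be chosen to satisfy both constraint families simultaneously, or borrow the paper's two-transform device at the admissibility step while retaining your power-mean bound on the denominator.
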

Before giving proof for this theorem, first of all, we consider two following lemmas.
\begin{Lemma}[{\bf Chebyshev's inequality \cite{be}}]
Consider $\lbrace a_{i} \rbrace_{i=1}^{n}$ and $\lbrace b_{i} \rbrace_{i=1}^{n}$ as two decreasing real sequences, then
\begin{align*}
\frac{1}{n}\sum_{i=1}^{n} a_{i}b_{i} \geq \left(\frac{1}{n} \sum_{i=1}^{n} a_{i}\right)\left(\frac{1}{n} \sum_{i=1}^{n} b_{i}\right).
\end{align*}
\end{Lemma}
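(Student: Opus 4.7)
My plan is to reduce the inequality to showing non-negativity of a symmetric double sum built from pairwise differences. Concretely, I would first establish the algebraic identity
\begin{equation*}
n\sum_{i=1}^{n} a_{i}b_{i} - \left(\sum_{i=1}^{n} a_{i}\right)\left(\sum_{i=1}^{n} b_{i}\right) = \frac{1}{2}\sum_{i=1}^{n}\sum_{j=1}^{n} (a_{i}-a_{j})(b_{i}-b_{j}),
\end{equation*}
which is a routine expansion: the right-hand side equals $\sum_{i,j}(a_{i}b_{i}-a_{i}b_{j})$ after using symmetry in $(i,j)$ and then collapsing one of the indices against the sum $\sum_{j}1 = n$. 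Dividing the identity by $n^{2}$ would then turn the statement of the lemma into the assertion that the double sum on the right is non-negative.

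Once that identity is in hand, the monotonicity hypothesis takes over. Since both $\{a_{i}\}$ and $\{b_{i}\}$ are decreasing, for every ordered pair $(i,j)$ the factors $(a_{i}-a_{j})$ and $(b_{i}-b_{j})$ share a sign: both are $\geq 0$ when $i\leq j$ and both are $\leq 0$ when $i\geq j$. Hence $(a_{i}-a_{j})(b_{i}-b_{j})\geq 0$ termwise, which makes the entire double sum non-negative and yields the inequality. Equality happens exactly when every such product vanishes, i.e.\ when one of the sequences is constant.

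The step I expect to require the most care is the algebraic identity itself: one has to be careful that the $i=j$ terms contribute zero on both sides and that the symmetrization factor $\tfrac{1}{2}$ is correctly accounted for. After that, the sign analysis is immediate from the decreasing hypothesis and no further inequalities (Cauchy–Schwarz, rearrangement, etc.) are needed. An alternative route, worth mentioning if one prefers a more conceptual argument, is to invoke the rearrangement inequality: for decreasing $\{a_i\}$ and any permutation $\sigma$, one has $\sum a_{i}b_{\sigma(i)}\leq \sum a_{i}b_{i}$, and averaging this inequality over all $\sigma\in S_{n}$ produces exactly $\frac{1}{n}(\sum a_i)(\sum b_i)$ on the left, recovering the claim; however, the direct identity-based proof above is shorter and self-contained.
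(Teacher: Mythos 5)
Your argument is correct and complete. Note that the paper does not actually prove this lemma: it is quoted as a classical result with a pointer to \cite{be}, so there is no internal proof to compare yours against. Your route is the standard self-contained one: the identity
\begin{equation*}
n\sum_{i=1}^{n}a_{i}b_{i}-\Bigl(\sum_{i=1}^{n}a_{i}\Bigr)\Bigl(\sum_{i=1}^{n}b_{i}\Bigr)
=\frac{1}{2}\sum_{i=1}^{n}\sum_{j=1}^{n}\left(a_{i}-a_{j}\right)\left(b_{i}-b_{j}\right)
\end{equation*}
expands exactly as you say (the diagonal terms vanish and the factor $\tfrac{1}{2}$ compensates the double counting), and since both sequences are decreasing the factors $a_{i}-a_{j}$ and $b_{i}-b_{j}$ have the same sign for every pair $(i,j)$, so each summand is nonnegative; dividing by $n^{2}$ gives the stated inequality. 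The alternative you mention, averaging the rearrangement inequality over all permutations, is equally valid, and the cited source \cite{be} presents yet another (weighted, Picard-style) argument; any of these suffices for the way the lemma is used in the proof of Lemma \ref{lemyek}, where only the inequality itself (applied to the decreasing rearrangements $\check{\psi}_{i}$, $\check{\eta}_{i}$) is needed. Your equality criterion (one of the sequences constant) is a correct side remark for monotone sequences but plays no role here.
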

\begin{Lemma}\label{lemyek}
Let $\phi : \left(M,g\right) \rightarrow \left(\mathbb{S}^{n}, can\right)$ be a smooth map which its level sets are zero measure, then there exist $\gamma, \delta \in G\left(n\right)$ and $p \geq q$ such that
\begin{itemize}
\item
If $p,q \geq 2$, then
\begin{align*}
\lambda_{1,p,q}\left(M\right) \leq \left(n+1\right)^{\frac{1}{2}p^{2}}\left(\frac{\alpha +1}{p}\int_{M}|d\psi|^{p} dv + \frac{\beta +1}{q}\int_{M}|d\eta|^{q} dv\right).
\end{align*}
\item
If $1<p,q<2$, then
\begin{align*}
\lambda_{1,p,q}\left(M\right) \leq \left(n+1\right)^{-\frac{1}{2}q\left(q+1\right)}\left(\frac{\alpha +1}{p}\int_{M}|d\psi|^{p} dv + \frac{\beta +1}{q}\int_{M} |d\eta|^{q} dv \right).
\end{align*}
\item
If $1<q<2\leq p$, then
\begin{align*}
\lambda_{1,p,q}\left(M\right) \leq \left(n+1\right)^{\frac{1}{2}\left(p^{2}-q\right)}\left( \frac{\alpha +1}{p}\int_{M} |d\psi |^{q} dv + \frac{\beta +1}{q} \int_{M}|d\eta |^{q} dv \right),
\end{align*}
\end{itemize}
where $\eta = \delta \circ \phi$ and $\psi = \gamma \circ \phi$.
\end{Lemma}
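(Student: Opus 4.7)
The strategy is to extend Matei's conformal-bound argument \cite{ma2} to the coupled $(p,q)$-system by introducing two independent conformal rescalings of $\phi$ and using the coordinate functions of both as two separate families of trial functions in the Rayleigh characterisation of $\lambda_{1,p,q}(M)$. Write $\psi=\gamma\circ\phi$, $\eta=\delta\circ\phi$ for $(\gamma,\delta)\in G(n)\times G(n)$, and let $\psi_{0},\dots,\psi_{n}$ and $\eta_{0},\dots,\eta_{n}$ be the coordinates of these maps in $\mathbb{R}^{n+1}$. The defining relations of $\mathbb{S}^{n}$ give the pointwise identities $\sum_{i}\psi_{i}^{2}\equiv 1$ and $\sum_{i}|\nabla\psi_{i}|^{2}=|d\psi|^{2}$ (and analogously for $\eta$), which are the main tools I intend to exploit.

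The first substantive task is to select $\gamma,\delta$ so that each pair $(\psi_{i},\eta_{j})$ satisfies the orthogonality constraints $\mathcal{B}(\psi_{i},\eta_{j})=\mathcal{E}(\psi_{i},\eta_{j})=0$ that render it admissible in the variational characterisation of $\lambda_{1,p,q}$. For this I would identify the quotient of $G(n)$ by the rotation subgroup with the open unit ball $B^{n+1}\subset\mathbb{R}^{n+1}$; the hypothesis that $\phi$ has zero-measure level sets is exactly what is needed to extend the associated ``centre of mass'' map continuously to $\partial B^{n+1}$ as the identity, so that a Brouwer/degree argument produces the required $\gamma$ and $\delta$. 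Because $\mathcal{B}$ and $\mathcal{E}$ couple $u$ and $v$ nontrivially, this must be carried out jointly for the two conformal parameters, which is the main technical obstacle.

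Once $\gamma,\delta$ are fixed, the remainder is computational. Plugging $(\psi_{i},\eta_{j})$ into the Rayleigh quotient for every $(i,j)$ and using that $\lambda\le a_{ij}/b_{ij}$ for all indices implies $\lambda\le(\sum a_{ij})/(\sum b_{ij})$, I obtain an inequality whose numerator contains $\sum_{i}|\nabla\psi_{i}|^{p}$ and $\sum_{j}|\nabla\eta_{j}|^{q}$, and whose denominator contains $(\sum_{i}|\psi_{i}|^{\alpha+1})(\sum_{j}|\eta_{j}|^{\beta+1})$. The elementary power-mean inequality $\sum_{i}x_{i}^{r}\le(n+1)^{\max(0,1-r/2)}(\sum_{i}x_{i}^{2})^{r/2}$, applied to $x_{i}=|\nabla\psi_{i}|$ and $x_{j}=|\nabla\eta_{j}|$ with $r=p,q$, converts the gradient sums into $|d\psi|^{p}$ and $|d\eta|^{q}$ up to an explicit power of $(n+1)$; Chebyshev's inequality applied to the ordered sequences $(\psi_{i}^{2})_{i}$ and $(|\psi_{i}|^{\alpha-1})_{i}$ (and analogously for $\eta$) controls the denominator from below, again modulo a power of $(n+1)$. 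The three regimes $p,q\ge 2$, $1<p,q<2$, and $1<q<2\le p$ then demand separate accounting of the exponents, and the resulting bookkeeping should reproduce the stated factors $(n+1)^{p^{2}/2}$, $(n+1)^{-q(q+1)/2}$, and $(n+1)^{(p^{2}-q)/2}$.
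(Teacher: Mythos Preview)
Your architecture matches the paper's: coordinate functions of $\psi=\gamma\circ\phi$ and $\eta=\delta\circ\phi$ serve as test pairs in the Rayleigh quotient, one sums over the indices, and power-mean inequalities convert the gradient sums into $|d\psi|^{p}$, $|d\eta|^{q}$ while the identity $\sum_{i}\psi_{i}^{2}=1$ controls the denominator. Two differences deserve comment.

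The paper does \emph{not} sum over all pairs $(i,j)$. It uses only the diagonal pairs $(\check\psi_{i},\check\eta_{i})$, where $\check\psi_{i},\check\eta_{i}$ are decreasing rearrangements of $\psi_{i},\eta_{i}$; the rearrangement is introduced precisely so that the sequences $(|\check\psi_{i}|^{\alpha+1})_{i}$ and $(|\check\eta_{i}|^{\beta+1})_{i}$ are similarly ordered, and Chebyshev's sum inequality is then invoked once to split $\sum_{i}|\check\psi_{i}|^{\alpha+1}|\check\eta_{i}|^{\beta+1}$ into the product of the two separate sums. Your all-pairs summation achieves this factorisation for free and is arguably cleaner, at the cost of an extra factor $(n+1)$ in the numerator, which is harmless for the stated exponents.

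Where your sketch has an actual gap is the lower bound on each factor $\sum_{i}|\psi_{i}|^{\alpha+1}$. Chebyshev applied to $(\psi_{i}^{2})_{i}$ and $(|\psi_{i}|^{\alpha-1})_{i}$ only yields $\sum_{i}|\psi_{i}|^{\alpha+1}\ge(n+1)^{-1}\sum_{i}|\psi_{i}|^{\alpha-1}$, which does not terminate in a constant. The paper's mechanism is different: from $|\psi_{i}|\le1$ one has $|\psi_{i}|^{\alpha+1}\ge|\psi_{i}|^{p(\alpha+1)}=(|\psi_{i}|^{2})^{p(\alpha+1)/2}$, and Jensen's inequality for the convex map $x\mapsto x^{p(\alpha+1)/2}$ (here $p\ge2$) together with $\sum_{i}\psi_{i}^{2}=1$ gives $\sum_{i}|\psi_{i}|^{\alpha+1}\ge(n+1)^{1-p(\alpha+1)/2}$. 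Combining with the analogous $\eta$ bound and using $p(\alpha+1)+q(\beta+1)\le p^{2}$ (which follows from $\tfrac{\alpha+1}{p}+\tfrac{\beta+1}{q}=1$ and $p\ge q$) produces the exponent $\tfrac12 p^{2}$; the other two regimes run the same Jensen step with modified exponents. Replace your Chebyshev here by Jensen and the bookkeeping closes.

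A final remark: the conditions $\mathcal{B}=\mathcal{E}=0$ that you plan to enforce by a Brouwer argument belong to the Neumann quantity $\mu_{1,p,q}$; the lemma is stated for $\lambda_{1,p,q}$, whose variational definition in the paper carries no such side constraint, and accordingly the paper simply asserts the existence of suitable $\gamma,\delta$ without carrying out a degree argument.
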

\begin{proof}
For $\alpha, \beta >0$ there exist $\gamma, \beta \in G\left(n\right)$ and for $\psi_{i} = \left(\gamma \circ \phi\right)_{i}$ and $\eta_{i} = \left(\delta \circ \phi\right)_{i}$ where $1\leq i \leq n+1$ we see
\begin{align*}
\lambda_{1,p,q}\left(M\right) \leq \frac{1}{\int_{M}|\check{\psi}_{i}|^{\alpha +1}|\check{\eta}_{i}|^{\beta +1}dv}\Big[\frac{\alpha +1}{p}\int_{M}|d\check{\psi}_{i}|^{p} dv + \frac{\beta +1}{q} \int_{M}|d\check{\eta}_{i}|^{q} \Big],
\end{align*}
where $\check{\eta}_{i}$ and $\check{\psi}_{i}$ are the decreasing rearrangement of $\eta_{i}$ and $\psi_{i}$ respectively, then
\begin{align*}
\lambda_{1,p,q}\left(M\right) \int_{M} |\check{\psi}_{i}|^{\alpha +1}|\check{\eta}_{i}|^{\beta +1} dv \leq \frac{\alpha +1}{p}\int_{M} |d\check{\psi}_{i}|^{p} + \frac{\beta +1}{q}\int_{M}|d\check{\eta}_{i}|^{q} dv.
\end{align*}
By taking summation from $i=1$ to $i = n+1$ from both sides we conclude that
\begin{align*}
\sum_{i=1}^{n+1}\left( \lambda_{1,p,q}\left(M\right) \int_{M} |\check{\psi}_{i}|^{\alpha +1}|\check{\eta}_{i}|^{\beta +1} dv \right) \leq \sum_{i=1}^{n+1}\left( \frac{\alpha +1}{p}\int_{M} |d\check{\psi}_{i}|^{p} + \frac{\beta +1}{q}\int_{M}|d\check{\eta}_{i}|^{q} \right),
\end{align*}
or
\begin{align*}
\lambda_{1,p,q}\left(M\right) \leq \frac{1}{\int_{M}\sum_{i =1}^{n+1} |\check{\psi}_{i}|^{\alpha +1}|\check{\eta}_{i}|^{\beta +1}dv}\Big[\frac{\alpha +1}{p}\int_{M}\sum_{i=1}^{n+1} |d\check{\psi}_{i}|^{p} dv + \frac{\beta +1}{q} \int_{M}\sum_{i=1}^{n+1} |d\check{\eta}_{i}|^{q} \Big].
\end{align*}
First of all, let $p,q \geq 2$, then
\begin{align*}
\sum_{i=1}^{n+1} |d\check{\psi}_{i}|^{p} = \sum_{i=1}^{n+1}\left(|d\check{\psi}_{i}|^{2}\right)^{\frac{p}{2}} &\leq \left(\sum_{i=1}^{n+1}|d\check{\psi}_{i}|^{2}\right)^{\frac{p}{2}} = |d\psi |^{p},
\end{align*}
and also
\begin{align*}
\sum_{i=1}^{n+1} |d\check{\eta}_{i}|^{q} = \sum_{i=1}^{n+1}\left(|d\check{\eta}_{i}|^{2}\right)^{\frac{q}{2}} &\leq \left(\sum_{i=1}^{n+1}|d\check{\eta}_{i}|^{2}\right)^{\frac{q}{2}} = |d\eta |^{q}.
\end{align*}
Now by Chebyshev's inequality we get
\begin{align}\label{char}
\sum_{i=1}^{n+1}|\check{\psi}_{i}|^{\alpha +1}|\check{\eta}_{i}|^{\beta +1} &\geq \frac{1}{n+1} \sum_{i=1}^{n+1}|\check{\psi}_{i}|^{\alpha +1}|\check{\eta}_{i}|^{\beta +1} \nonumber \\
&\geq \left(\frac{1}{n+1}\sum_{i=1}^{n+1}|\check{\psi}_{i}|^{\alpha +1}\right) \left(\frac{1}{n+1}\sum_{i=1}^{n+1}|\check{\eta}_{i}|^{\beta +1}\right) \nonumber \\
&\geq \left(\frac{1}{n+1}\sum_{i=1}^{n+1}|\check{\psi}_{i}|^{p\left(\alpha +1\right)}\right) \left(\frac{1}{n+1}\sum_{i=1}^{n+1}|\check{\eta}_{i}|^{q\left(\beta +1\right)}\right) \nonumber \\
&= \left(\sum_{i=1}^{n+1} \frac{1}{n+1}\left(|\check{\psi}_{i}|^{2}\right)^{\frac{p}{2}\left(\alpha +1\right)}\right) \left(\sum_{i=1}^{n+1} \frac{1}{n+1}\left(|\check{\eta}_{i}|^{2}\right)^{\frac{q}{2}\left(\beta +1\right)}\right).
\end{align}
By Jensen's inequality in (\ref{char}) we conclude that
\begin{align*}
\sum_{i=1}^{n+1}|\check{\psi}_{i}|^{\alpha +1}|\check{\eta}_{i}|^{\beta +1} &\geq \left(\sum_{i=1}^{n+1} \frac{1}{n+1}|\check{\psi}_{i}|^{2}\right)^{\frac{p}{2}\left(\alpha +1\right)} \left(\sum_{i=1}^{n+1} \frac{1}{n+1}|\check{\eta}_{i}|^{2}\right)^{\frac{q}{2}\left(\beta +1\right)} \\
&= \left(\frac{1}{n+1}\right)^{\frac{p}{2}\left(\alpha +1\right)}\left(\sum_{i=1}^{n+1}|\check{\psi}_{i}|^{2}\right)^{\frac{p}{2}\left(\alpha +1\right)} \left(\frac{1}{n+1}\right)^{\frac{q}{2}\left(\beta +1\right)}\left(\sum_{i=1}^{n+1}|\check{\eta}_{i}|^{2}\right)^{\frac{q}{2}\left(\beta +1\right)}\\
&= \left(n+1\right)^{-\frac{1}{2}\left(p\left(\alpha +1\right) + q\left(\beta +1\right)\right)}.
\end{align*}
What we have done is dependent on two essential issues, first $\sum_{i=1}^{n+1}|\psi_{i}|^{2} = \sum_{i=1}^{n+1}|\eta_{i}|^{2} =1$ and also  we know that the map $x \rightarrow x^{\frac{R}{2}}$ for $\frac{R}{2} \geq 1$ is concave. Now under consideration $p \geq q$ we have
\begin{align*}
\lambda_{1,p,q}\left(M\right) \leq \left(n+1\right)^{\frac{1}{2}p^{2}}\left(\frac{\alpha +1}{p}\int_{M}|d\psi|^{p} dv + \frac{\beta +1}{q}\int_{M}|d\eta|^{q} dv\right).
\end{align*}
In the case that $1< p,q <2$, since
\begin{align*}
 |\psi_{i}| \leq 1, \quad |\eta_{i}| \leq 1,
\end{align*}
and also the maps $x \rightarrow x^{\frac{p+1}{2}}$ and $x \rightarrow x^{\frac{q+1}{2}}$ are concave, by the similar way
\begin{align*}
\sum_{i=1}^{n+1}|\check{\psi}_{i}|^{\alpha +1}|\check{\eta}_{i}|^{\beta +1} &\geq \frac{1}{n+1}\sum_{i=1}^{n+1}|\check{\psi}_{i}|^{\alpha +1}|\check{\eta}_{i}|^{\beta +1}\\
&\geq \left(\frac{1}{n+1}\sum_{i=1}^{n+1}|\check{\psi}_{i}|^{\alpha +1}\right) \left(\frac{1}{n+1}\sum_{i=1}^{n+1}|\check{\eta}_{i}|^{\beta +1}\right) \nonumber \\
&\geq \left(\frac{1}{n+1}\sum_{i=1}^{n+1}|\check{\psi}_{i}|^{\left(p+1\right)\left(\alpha +1\right)}\right) \left(\frac{1}{n+1}\sum_{i=1}^{n+1}|\check{\eta}_{i}|^{\left(q+1\right)\left(\beta +1\right)}\right) \nonumber \\
&= \left(\sum_{i=1}^{n+1} \frac{1}{n+1}\left(|\check{\psi}_{i}|^{2}\right)^{\frac{p+1}{2}\left(\alpha +1\right)}\right) \left(\sum_{i=1}^{n+1} \frac{1}{n+1}\left(|\check{\eta}_{i}|^{2}\right)^{\frac{q+1}{2}\left(\beta +1\right)}\right).
\end{align*}
And also by Jensen's inequality, it concludes that
\begin{align*}
\sum_{i=1}^{n+1}|\check{\psi}_{i}|^{\alpha +1}|\check{\eta}_{i}|^{\beta +1} &\geq \left(\sum_{i=1}^{n+1} \frac{1}{n+1}|\check{\psi}_{i}|^{2}\right)^{\frac{p+1}{2}\left(\alpha +1\right)}  \left(\sum_{i=1}^{n+1} \frac{1}{n+1}|\check{\eta}_{i}|^{2}\right)^{\frac{q+1}{2}\left(\beta +1\right)} \\
&= \left(\frac{1}{n+1}\right)^{\frac{p+1}{2}\left(\alpha +1\right)}\left(\sum_{i=1}^{n+1}|\check{\psi}_{i}|^{2}\right)^{\frac{p+1}{2}\left(\alpha +1\right)}\\
&\times \left(\frac{1}{n+1}\right)^{\frac{q+1}{2}\left(\beta +1\right)}\left(\sum_{i=1}^{n+1}|\check{\eta}_{i}|^{2}\right)^{\frac{q+1}{2}\left(\beta +1\right)}\\
&= \left(n+1\right)^{-\frac{1}{2}\left(\left(p+1\right)\left(\alpha +1\right) + \left(q+1\right)\left(\beta +1\right)\right)}.
\end{align*}
Since $x\rightarrow x^{\frac{p}{2}}$ and $x \rightarrow x^{\frac{q}{2}}$ are convex, we see
\begin{align*}
\sum_{i=1}^{n+1}|d\check{\psi}_{i}|^{p} = \sum_{i=1}^{n+1}\left(|d\check{\psi}_{i}|^{2}\right)^{\frac{p}{2}} &\leq \left(n+1\right)^{1-\frac{p}{2}}\left(\sum_{i=1}^{n+1}|d\check{\psi}_{i}|^{2}\right)^{\frac{p}{2}} \\
&= \left(n+1\right)^{1-\frac{p}{2}}|d\psi |^{p},
\end{align*}
and
\begin{align*}
\sum_{i=1}^{n+1}|d\check{\eta}_{i}|^{q} = \sum_{i=1}^{n+1}\left(|d\check{\eta}_{i}|^{2}\right)^{\frac{q}{2}} &\leq \left(n+1\right)^{1-\frac{q}{2}}\left(\sum_{i=1}^{n+1}|d\check{\eta}_{i}|^{2}\right)^{\frac{q}{2}} \\
&= \left(n+1\right)^{1-\frac{q}{2}}|d\eta |^{q}.
\end{align*}
These together with $p \geq q$ conclude that
\begin{align*}
\lambda_{1,p,q}\left(M\right) \leq  \left(n+1\right)^{-\frac{1}{2}q\left(q+1\right)}\left(\frac{\alpha +1}{p}\int_{M}|d\psi|^{p} dv + \frac{\beta +1}{q}\int_{M} |d\eta|^{q} dv \right).
\end{align*}
In the case that $1< q<2\leq p$, since $x \rightarrow x^{\frac{p}{2}}$ is convex, in the similar way
\begin{align*}
\sum_{i=1}^{n+1}|\check{\psi}_{i}|^{\alpha +1}|\check{\eta}_{i}|^{\beta +1} &\geq \frac{1}{n+1}\sum_{i=1}^{n+1}|\check{\psi}_{i}|^{\alpha +1}|\check{\eta}_{i}|^{\beta +1}\\
&\geq \left(\frac{1}{n+1}\sum_{i=1}^{n+1}|\check{\psi}_{i}|^{\alpha +1}\right) \left(\frac{1}{n+1}\sum_{i=1}^{n+1}|\check{\eta}_{i}|^{\beta +1}\right) \nonumber \\
&\geq \left(\frac{1}{n+1}\sum_{i=1}^{n+1}|\check{\psi}_{i}|^{p\left(\alpha +1\right)}\right) \left(\frac{1}{n+1}\sum_{i=1}^{n+1}|\check{\eta}_{i}|^{p\left(\beta +1\right)}\right) \nonumber \\
&= \left(\sum_{i=1}^{n+1} \frac{1}{n+1}\left(|\check{\psi}_{i}|^{2}\right)^{\frac{p}{2}\left(\alpha +1\right)}\right) \left(\sum_{i=1}^{n+1} \frac{1}{n+1}\left(|\check{\eta}_{i}|^{2}\right)^{\frac{p}{2}\left(\beta +1\right)}\right),
\end{align*}
and again by Jensen's inequality
\begin{align*}
\sum_{i=1}^{n+1}|\check{\psi}_{i}|^{\alpha +1}|\check{\eta}_{i}|^{\beta +1} \geq \left(n+1\right)^{-\frac{p}{2}\left(\alpha + \beta + 2\right)}.
\end{align*}
Furthermore, since $x \rightarrow x^{\frac{q}{2}}$ is convex, we get
\begin{align*}
\sum_{i=1}^{n+1}|d\check{\psi}_{i}|^{p} \leq \sum_{i=1}^{n+1}|d\check{\psi}_{i}|^{q} = \sum_{i=1}^{n+1}\left(|d\check{\psi}_{i}|^{2}\right)^{\frac{q}{2}} \leq \left(\sum_{i=1}^{n+1}|d\check{\psi}_{i}|^{2}\right)^{\frac{q}{2}} =\left(n+1\right)^{1-\frac{q}{2}} |d\psi |^{q},
\end{align*}
and
\begin{align*}
\sum_{i=1}^{n+1}|d\check{\eta}_{i}|^{q} = \sum_{i=1}^{n+1}\left(|d\check{\eta}_{i}|^{2}\right)^{\frac{q}{2}} \leq \left(\sum_{i=1}^{n+1}|d\check{\eta}_{i}|^{2}\right)^{\frac{q}{2}} =\left(n+1\right)^{1-\frac{q}{2}} |d\eta |^{q}.
\end{align*}
These together imply that
\begin{align*}
\lambda_{1,p,q}\left(M\right) \leq \left(n+1\right)^{\frac{1}{2}\left(p^{2}-q\right)}\left( \frac{\alpha +1}{p}\int_{M} |d\psi |^{q} dv + \frac{\beta +1}{q} \int_{M}|d\eta |^{q} dv \right).
\end{align*}
\end{proof}
\begin{proof}[{\bf Proof of Theorem \ref{yet}}]
For $p,q \geq 2$, by Lemma \ref{lemyek} we saw that
\begin{align*}
\lambda_{1,p,q}\left(M\right) \leq \left(n+1\right)^{\frac{1}{2}p^{2}}\left(\frac{\alpha +1}{p}\int_{M}|d\psi|^{p} + \frac{\beta +1}{q}\int_{M}|d\eta|^{q} \right),
\end{align*}
and also
\begin{align*}
\int_{M}|d\psi|^{p} dv &\leq \left(\int_{M}|d\psi|^{m} dv\right)^{\frac{p}{m}}\\
\int_{M}|d\eta|^{q} dv &\leq \left(\int_{M}|d\eta|^{m} dv\right)^{\frac{q}{m}},
\end{align*}
on the other side, $\psi = \gamma \circ \phi : \left(M,g\right) \rightarrow \left(\mathbb{S}^{n}, can\right)$ is a conformal immersion and since
\begin{align*}
\left(\gamma \circ \phi \right)^{*}can &= \frac{|d\left(\gamma \circ \phi \right)|^{2}}{m} = \frac{|d\psi |^{2}}{m},
\end{align*}
from \cite{ma2} we conclude that
\begin{align*}
\int_{M}|d\psi|^{p} dv &= m^{\frac{p}{2}} {\rm vol}\left(M, \left(\gamma\circ \phi \right)^{*}can \right) \\
&\leq m^{\frac{p}{2}} \sup_{\gamma \in G\left(n\right)}{\rm vol}\left(M, \left(\gamma \circ \phi\right)^{*}can \right),
\end{align*}
and in the similar way
\begin{align*}
\int_{M}|d\eta|^{q} dv &= m^{\frac{q}{2}} {\rm vol}\left(M, \left(\delta \circ \phi \right)^{*}can \right) \\
&\leq m^{\frac{q}{2}} \sup_{\delta \in G\left(n\right)}{\rm vol}\left(M, \left(\delta \circ \phi\right)^{*}can \right).
\end{align*}
Now by taking "$\inf $" with respect to  $\phi$ in the above inequality we get
\begin{align*}
\lambda_{1,p,q}\left(M\right) \leq \left(n+1\right)^{\frac{1}{2}p^{2}} \Big[\frac{\alpha +1}{p}m^{\frac{p}{2}}\left(V_{n}^{c}\left(M,[g]\right)\right)^{\frac{p}{m}} + \frac{\beta +1}{q}m^{\frac{q}{2}}\left(V_{n}^{c}\left(M, [g]\right)\right)^{\frac{q}{m}}\Big].
\end{align*}
Since $p \geq q$, we have
\begin{align*}
\lambda_{1,p,q}\left(M\right) \leq \left(n+1\right)^{\frac{1}{2}p^{2}}m^{\frac{p}{2}}\left(V_{n}^{c}\left(M,[g]\right)\right)^{\frac{p}{m}}.
\end{align*}
Also for $1< p,q <2$, in the similar context we get
\begin{align*}
\lambda_{1,p,q}\left(M\right) \leq \left(n+1\right)^{-\frac{1}{2}q\left(q+1\right)}m^{\frac{q}{2}}\left(V_{n}^{c}\left(M,[g]\right)\right)^{\frac{q}{m}},
\end{align*}
and also for $1<q<2\leq p$ we find that
\begin{align*}
\lambda_{1,p,q}\left(M\right) \leq \left(n+1\right)^{\frac{1}{2}\left(p^{2} - q\right)}m^{\frac{q}{2}}\left(V_{n}^{c}\left(M,[g]\right)\right)^{\frac{q}{m}}.
\end{align*}
\end{proof}
The similar problem was studied before in \cite{li} for surfaces and also for upper dimension manifold there are some results in \cite{el} for $p$-Laplacian operator. Li and Yau \cite{li}, proved that the upper bound for $V_{n}^{c}\left(M, [g]\right)$ just depend on the genus of $M$. They actually proved that for orientable surface $M$ (when $m=2$) and for $n\geq 2$ we have
\begin{align*}
V_{n}^{c}\left(M,[g]\right) \leq 4\pi \Big[\frac{\tau \left(M\right) +3}{2}\Big],
\end{align*}
and also for non-orientable surface $M$ and $n\geq 4$ we get
\begin{align*}
V_{n}^{c}\left(M,[g]\right) \leq 12\pi \Big[\frac{\tau \left(M\right) +3}{2}\Big],
\end{align*}
where $\tau\left(M\right)$ is genus of $M$ and $[.]$ denotes the bracket function.
\begin{Rem}
Consider $M$ as a compact manifold and $m\geq p \geq q$. Let $\lambda_{1,p,q}$ denotes the first eigenvalue of the $\left(p,q\right)$-Laplacian (\ref{se}), if $M$ is orientable and $n\geq 2$, then
\begin{itemize}
\item
If $p\geq q\geq 2$, then
\begin{align*}
\lambda_{1,p,q}\left(M\right) \leq \left(n+1\right)^{\frac{1}{2}p^{2}}\left(8\pi \right)^{\frac{p}{2}} \Big[\frac{\tau \left(M\right) +3}{2}\Big]^{\frac{p}{2}}.
\end{align*}
\item
If $1<q\leq p <2$, then
\begin{align*}
\lambda_{1,p,q}\left(M\right) \leq \left(n+1\right)^{-\frac{1}{2}q\left(q+1\right)}\left(8\pi \right)^{\frac{p}{2}} \Big[\frac{\tau \left(M\right) +3}{2}\Big]^{\frac{p}{2}}.
\end{align*}
\item
If $1<q<2\leq p$, then
\begin{align*}
\lambda_{1,p,q}\left(M\right) \leq \left(n+1\right)^{\frac{1}{2}\left(p^{2}-q\right)}\left(8\pi \right)^{\frac{p}{2}} \Big[\frac{\tau \left(M\right) +3}{2}\Big]^{\frac{p}{2}}.
\end{align*}
\end{itemize}
Also if $M$ is non-orientable and for $n\geq 4$,
\begin{itemize}
\item
if $p\geq q\geq 2$, then
\begin{align*}
\lambda_{1,p,q}\left(M\right) \leq \left(n+1\right)^{\frac{1}{2}p^{2}}\left(24\pi \right)^{\frac{p}{2}} \Big[\frac{\tau \left(M\right) +3}{2}\Big]^{\frac{p}{2}}.
\end{align*}
\item
If $1<q\leq p<2$, then
\begin{align*}
\lambda_{1,p,q}\left(M\right) \leq \left(n+1\right)^{-\frac{1}{2}q\left(q+1\right)}\left(24\pi \right)^{\frac{p}{2}} \Big[\frac{\tau \left(M\right) +3}{2}\Big]^{\frac{p}{2}}.
\end{align*}
\item
If $1<q<2\leq p$, then
\begin{align*}
\lambda_{1,p,q}\left(M\right) \leq \left(n+1\right)^{\frac{1}{2}\left(p^{2}-q\right)}\left(24\pi \right)^{\frac{p}{2}} \Big[\frac{\tau \left(M\right) +3}{2}\Big]^{\frac{p}{2}}.
\end{align*}
\end{itemize}
\end{Rem}
\section{The second case, $p,q >m$}
Let $r \in \left[0, \pi\right]$ be a geodesic distance and $\epsilon >0$, then the radial function $f_{\epsilon}: \mathbb{S}^{n} \rightarrow \mathbb{R}$ is defined as
\begin{align*}
f_{\epsilon}\left(r\right) = \epsilon^{\frac{4p}{m\left(p-m\right)}}. \chi_{[0, \frac{\pi}{2}-\epsilon ] \cup [\frac{\pi}{2}+\epsilon, \pi ]}\left(r\right) + \chi_{\left(\frac{\pi}{2}-\epsilon, \frac{\pi}{2}+\epsilon \right)}\left(r\right),
\end{align*}
where $\chi$ is denoted as characteristic function. Now let
\begin{align*}
R_{\epsilon}\left(u,v\right):=   \frac{1}{\int_{\mathbb{S}^{m-1}}f_{\epsilon}^{\frac{m}{2}}|u|^{\alpha +1}|v|^{\beta +1}dv_{can}} &\Big[\frac{\alpha +1}{p}\int_{\mathbb{S}^{m-1}}f_{\epsilon}^{\frac{m-p}{2}}|d u|^{p}dv_{can}\\
&+ \frac{\beta +1}{q}\int_{\mathbb{S}^{m-1}}f_{\epsilon}^{\frac{m-p}{2}}|d v|^{q}dv_{can} \Big].
\end{align*}
Then
\begin{align*}
\lambda_{1,p,q}\left(\epsilon \right) = \inf_{u,v \neq 0} \Big\lbrace R_{\epsilon}\left(u,v\right)| \left(u,v\right) \in W_{0}^{1,p}\times W_{0}^{1,q} \setminus \lbrace 0 \rbrace \Big\rbrace.
\end{align*}
It seems clear that $\lambda_{1,p,q}\left(\epsilon\right)$ is a parametrization for the first eigenvalue of the $\left(p,q\right)$-Laplacian system (\ref{se}).
\begin{Th}\label{dot}
Consider $M$ as an $m$-dimensional compact manifold. If $\lambda_{1,p,q}$ denotes the first eigenvalue of the $\left(p,q\right)$-Laplacian system (\ref{se}) and $p\geq q >m$ then
\begin{align*}
\limsup_{\epsilon \rightarrow 0}\lambda_{1,p,q}\left(\epsilon \right). \epsilon^{\frac{p}{m}} = \infty.
\end{align*}
\end{Th}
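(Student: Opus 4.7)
The plan is to argue by contradiction: assume there exist $K>0$ and $\epsilon_{0}>0$ with $\lambda_{1,p,q}(\epsilon)\,\epsilon^{p/m}\le K$ for every $\epsilon\in(0,\epsilon_{0})$. By the variational definition of $\lambda_{1,p,q}(\epsilon)$ one can then pick, for each such $\epsilon$, an admissible pair $(u_{\epsilon},v_{\epsilon})$ normalized so that $\int f_{\epsilon}^{m/2}|u_{\epsilon}|^{\alpha+1}|v_{\epsilon}|^{\beta+1}\,dv_{can}=1$ and whose Rayleigh numerator $N_{\epsilon}$ is at most $2K\epsilon^{-p/m}$. The goal is to rule out such a family.

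The first step is to exploit the cap region $C_{\epsilon}:=[0,\pi/2-\epsilon]\cup[\pi/2+\epsilon,\pi]$, where $f_{\epsilon}^{(m-p)/2}=\epsilon^{-2p/m}$ (this is where the hypothesis $p>m$ enters critically). Comparing the cap part of $N_{\epsilon}$ with $2K\epsilon^{-p/m}$ forces $\int_{C_{\epsilon}}|du_{\epsilon}|^{p}\le C\epsilon^{p/m}$, and likewise for $dv_{\epsilon}$. Because $p,q>m$, the Sobolev--Morrey embedding $W^{1,r}\hookrightarrow C^{0,\,1-m/r}$ applied on each component of $C_{\epsilon}$ forces the oscillations of $u_{\epsilon}$ and $v_{\epsilon}$ on each polar cap to be $O(\epsilon^{1/m})$, so these functions are asymptotically constant there as $\epsilon\to 0$.

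The second step is a reduction to single-operator problems using the compatibility relation $(\alpha+1)/p+(\beta+1)/q=1$. H\"older's inequality with exponents $p/(\alpha+1)$ and $q/(\beta+1)$ yields
\begin{equation*}
\int f_{\epsilon}^{m/2}|u_{\epsilon}|^{\alpha+1}|v_{\epsilon}|^{\beta+1}\,dv_{can}\le\Big(\int f_{\epsilon}^{m/2}|u_{\epsilon}|^{p}\,dv_{can}\Big)^{(\alpha+1)/p}\Big(\int f_{\epsilon}^{m/2}|v_{\epsilon}|^{q}\,dv_{can}\Big)^{(\beta+1)/q},
\end{equation*}
and combining this with the weighted arithmetic--geometric inequality applied to $N_{\epsilon}$ produces the lower bound
\begin{equation*}
\lambda_{1,p,q}(\epsilon)\ge\lambda_{1,p}(g_{\epsilon})^{(\alpha+1)/p}\,\lambda_{1,q}(g_{\epsilon})^{(\beta+1)/q},
\end{equation*}
where $g_{\epsilon}:=f_{\epsilon}g_{can}$ and $\lambda_{1,r}(g_{\epsilon})$ denotes the first eigenvalue of the $r$-Laplacian on $(M,g_{\epsilon})$.

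Finally, the exponent $\tfrac{4p}{m(p-m)}$ in $f_{\epsilon}$ is tuned precisely so that $V(g_{\epsilon})\asymp\epsilon$ and so that a Sobolev--Morrey argument in the spirit of Matei (adapted to the regime $r>m$) yields $\lambda_{1,r}(g_{\epsilon})\,\epsilon^{r/m}\to\infty$ for $r\in\{p,q\}$. Feeding this back into the reduction contradicts the standing assumption $\lambda_{1,p,q}(\epsilon)\,\epsilon^{p/m}\le K$, and the theorem follows. The main technical obstacle lies in the rate book-keeping in this last step: the $q$-factor naturally scales against $\epsilon^{q/m}$ rather than $\epsilon^{p/m}$, so one must absorb the mismatch by using $p\ge q$ together with the precise scaling of $\operatorname{diam}(g_{\epsilon})$, and verify that the choice $\tfrac{4p}{m(p-m)}$ of exponent is what makes the joint $(p,q)$ lower bound overshoot the target rate $\epsilon^{-p/m}$.
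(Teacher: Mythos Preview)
Your overall architecture diverges substantially from the paper's, and the point where it diverges is exactly where it breaks.

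The paper never decouples $u$ and $v$. It first radializes: replacing $u_{\epsilon},v_{\epsilon}$ by their spherical $L^{p}$- and $L^{q}$-means $\bar u_{\epsilon},\bar v_{\epsilon}$ only improves the Rayleigh quotient. On the upper hemisphere it then writes $\bar u_{\epsilon}=a_{\epsilon}+b_{\epsilon}$, $\bar v_{\epsilon}=c_{\epsilon}+d_{\epsilon}$, where $a_{\epsilon},c_{\epsilon}$ are frozen on the equatorial band and $b_{\epsilon},d_{\epsilon}$ are supported in it, and runs a dichotomy directly on the \emph{coupled} quotient: either the cap gradient energy of $(a_{\epsilon},c_{\epsilon})$ stays bounded away from $0$ (and the cap weight $\epsilon^{-2p/m}$ finishes the job), or that energy tends to $0$, the denominator collapses, and the remaining band contribution is bounded below by the coupled one-dimensional eigenvalue $\lambda_{1,p,q}(-\epsilon_{n},\epsilon_{n})$, whose scaling in $\epsilon_{n}$ is strong enough.

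Your Step~1 and the cap-weight observation are fine, and your Sobolev--Morrey remark parallels the paper's ``$a_{\epsilon},c_{\epsilon}\to$ constants'' step. The genuine gap is the reduction in Step~2--3. Even granting
\[
\lambda_{1,p,q}(\epsilon)\ \ge\ \lambda_{1,p}(g_{\epsilon})^{(\alpha+1)/p}\,\lambda_{1,q}(g_{\epsilon})^{(\beta+1)/q},
\]
the single-operator input you invoke, $\lambda_{1,r}(g_{\epsilon})\,\epsilon^{r/m}\to\infty$, only gives
\[
\lambda_{1,p,q}(\epsilon)=\omega\!\big(\epsilon^{-(\alpha+1)/m-(\beta+1)/m}\big)=\omega\!\big(\epsilon^{-(\alpha+\beta+2)/m}\big).
\]
But the compatibility relation $\frac{\alpha+1}{p}+\frac{\beta+1}{q}=1$ rewrites as $\alpha+\beta+2=p-(\beta+1)\frac{p-q}{q}$, so whenever $p>q$ one has $\alpha+\beta+2<p$ strictly. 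Hence your lower bound is $\omega(\epsilon^{-(\alpha+\beta+2)/m})$ with exponent \emph{strictly smaller} than $p/m$, and multiplying by $\epsilon^{p/m}$ does not force divergence. The ``rate book-keeping'' you flag is not a technicality to be absorbed by diameter estimates: the H\"older/AM--GM decoupling is intrinsically lossy by exactly this amount, and no appeal to $p\ge q$ recovers it. There is a second, independent issue: Matei's $q$-Laplacian asymptotic is proved for the conformal factor with exponent $\tfrac{4q}{m(q-m)}$, not the exponent $\tfrac{4p}{m(p-m)}$ used in $f_{\epsilon}$, so ``$\lambda_{1,q}(g_{\epsilon})\,\epsilon^{q/m}\to\infty$'' is not available off the shelf and would itself require a separate argument.

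The fix is to abandon the decoupling and keep the pair $(u_{\epsilon},v_{\epsilon})$ together all the way down to a one-dimensional coupled problem, as the paper does; the coupled scaling then carries the full $p$-rate without any loss.
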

This theorem actually gives us the comparison between $\lambda_{1,p,q}\left(\epsilon\right)$ and $\lambda_{1,p,q}\left(\mathbb{S}^{m}, can\right)$.
\begin{proof}[{\bf Proof of Theorem \ref{dot}}]
Consider the radial functions $\bar{u}_{\epsilon}, \bar{v}_{\epsilon}: \mathbb{S}^{m} \rightarrow \mathbb{R}$ as
\begin{align*}
\bar{u}_{\epsilon}^{p}\left(r\right) &= \frac{1}{V}\int_{\mathbb{S}^{m-1}}|u_{\epsilon}\left(r,.\right)|^{p} dv_{can},\\
\bar{v}_{\epsilon}^{q}\left(r\right) &= \frac{1}{V}\int_{\mathbb{S}^{m-1}}|v_{\epsilon}\left(r,.\right)|^{q} dv_{can},
\end{align*}
where $V$ stands for ${\rm vol}\left(\mathbb{S}^{m-1}, can\right)$. By taking derivative with respect to $r$ we get
\begin{align*}
p\bar{u}_{\epsilon}^{p-1}\bar{u}^{\prime}_{\epsilon} = \frac{p}{V}\int_{\mathbb{S}^{m-1}}|u_{\epsilon}|^{p-2}u_{\epsilon} \frac{\partial u_{\epsilon}}{\partial r} dv_{can},
\end{align*}
and the similar context holds for $v$ and $q$. By H\"older's inequality we have
\begin{align*}
\bar{u}_{\epsilon}^{p-1}|\bar{u}_{\epsilon}^{\prime}| &\leq \frac{1}{V}\int_{\mathbb{S}^{m-1}}|u_{\epsilon}|^{p-1}|\frac{\partial u_{\epsilon}}{\partial r}| dv_{can}\\
&\leq \frac{1}{V}\left(\int_{\mathbb{S}^{m-1}}|u_{\epsilon}|^{p} dv_{can}\right)^{\frac{p-1}{p}} \left(\int_{\mathbb{S}^{m-1}}|\frac{\partial u_{\epsilon}}{\partial r}|^{p} dv_{can} \right)^{\frac{1}{p}},
\end{align*}
which concludes that
\begin{align}\label{pang}
|\bar{u}^{\prime}_{\epsilon}|^{p} \leq \frac{1}{V} \int_{\mathbb{S}^{m-1}}|\frac{\partial u_{\epsilon}}{\partial r}|^{p} dv_{can} \leq \frac{1}{V}\int_{\mathbb{S}^{m-1}}|du_{\epsilon}|^{p} dv_{can}.
\end{align}
Since $\frac{\alpha +1}{2} + \frac{\beta +1}{2} =1$, H\"older's inequality implies that
\begin{align*}
\int_{\mathbb{S}^{m}} |u_{\epsilon}|^{\alpha +1}|v_{\epsilon}|^{\beta +1}dv_{can} \leq \left(\int_{\mathbb{S}^{m}}|u_{\epsilon}|^{p} dv_{can}\right)^{\frac{\alpha +1}{p}}\left(\int_{\mathbb{S}^{m}}|v_{\epsilon}|^{q} dv_{can}\right)^{\frac{\beta +1}{q}},
\end{align*}
and again by H\"older's inequality we see
\begin{center}
\begin{align*}
\int_{\mathbb{S}^{m}}f_{\epsilon}^{\frac{m}{2}}|\bar{u}_{\epsilon}|^{\alpha +1}|\bar{v}_{\epsilon}|^{\beta +1} dv_{can} &= V.\int_{0}^{\pi}f_{\epsilon}^{\frac{m}{2}}|\bar{u}_{\epsilon}|^{\alpha +1}|\bar{v}_{\epsilon}|^{\beta +1} \sin r^{m-1} dr \\ \nonumber
&= V.\int_{0}^{\pi}\Big[ f_{\epsilon}^{\frac{m}{2}}\left(\frac{1}{V}\int_{\mathbb{S}^{m-1}}|u_{\epsilon}|^{p} dv_{can}\right)^{\frac{\alpha +1}{p}}\\
&\left(\frac{1}{V}\int_{\mathbb{S}^{m-1}}|v_{\epsilon}|^{q} dv_{can}\right)^{\frac{\beta +1}{q}} \sin r^{m-1} \Big]dr \\ \nonumber
&=\int_{0}^{\pi}\Big[ f_{\epsilon}^{\frac{m}{2}}\left(\int_{\mathbb{S}^{m-1}}|u_{\epsilon}|^{p} dv_{can}\right)^{\frac{\alpha +1}{p}}\\
&\left(\int_{\mathbb{S}^{m-1}}|v_{\epsilon}|^{q} dv_{can}\right)^{\frac{\beta +1}{q}} \sin r^{m-1} \Big]dr \\ \nonumber
&\geq \int_{0}^{\pi} f_{\epsilon}^{\frac{m}{2}}\left(\int_{\mathbb{S}^{m-1}}|u_{\epsilon}|^{\alpha +1}|v_{\epsilon}|^{\beta +1} dv_{can}\right) \sin r^{m-1} dr \\ \nonumber
&\geq \int_{\mathbb{S}^{m}}f_{\epsilon}^{\frac{m}{2}}|u_{\epsilon}|^{\alpha +1}|v_{\epsilon}|^{\beta +1} dv_{can}.
\end{align*}
\end{center}
From (\ref{pang}), we get
\begin{align*}
\int_{\mathbb{S}^{m}}f_{\epsilon}^{\frac{m-p}{2}}|\bar{u}_{\epsilon}^{\prime}|^{p} dv_{can} &= V.\int_{0}^{\pi}f_{\epsilon}^{\frac{m-p}{2}}|\bar{u}_{\epsilon}^{\prime}|^{p} \sin r^{m-1} dr \\
&\leq \int_{0}^{\pi} \Big[\int_{\mathbb{S}^{m-1}}|du_{\epsilon}|^{p} dv_{can} \Big]f_{\epsilon}^{\frac{m-p}{2}} \sin r^{m-1} dr \\
&= \int_{\mathbb{S}^{m}}f_{\epsilon}^{\frac{m-p}{2}}|du_{\epsilon}|^{p} dv_{can},
\end{align*}
and in the similar way
\begin{align*}
\int_{\mathbb{S}^{m}}f_{\epsilon}^{\frac{m-p}{2}}|\bar{v}_{\epsilon}^{\prime}|^{q} dv_{can} \leq \int_{S^{m}}f_{\epsilon}^{\frac{m-p}{2}}|dv_{\epsilon}|^{q} dv_{can}.
\end{align*}
If $\mathbb{S}^{m}_{+}$ and $\mathbb{S}^{m}_{-}$ denote the upper and lower hemispheres centered at $x_{0}$ and $-x_{0}$ respectively, then
\begin{align*}
\lambda_{1,p,q}\left(\epsilon \right) &\geq \frac{1}{\int_{\mathbb{S}^{m}}f_{\epsilon}^{\frac{m}{2}}|\bar{u}_{\epsilon}|^{\alpha +1}|\bar{v}_{\epsilon}|^{\beta +1}dv_{can}} \Big[ \frac{\alpha +1}{p}\int_{\mathbb{S}^{m}}f_{\epsilon}^{\frac{m-p}{2}}|\bar{u}_{\epsilon}^{\prime}|^{p} dv_{can}
+ \frac{\beta +1}{q} \int_{\mathbb{S}^{m}}f_{\epsilon}^{\frac{m-p}{2}}|\bar{v}_{\epsilon}^{\prime}|^{q} dv_{can} \Big] \\
&\geq \min \lbrace \lambda_{1,p,q}^{+}, \lambda_{1,p,q}^{-} \rbrace,
\end{align*}
where $\lambda_{1,p,q}^{+}$ and $\lambda_{1,p,q}^{-}$  mean that taking above integrals on upper and lower hemispheres respectively. Without loss of generality, let
\begin{align*}
\lambda_{1,p,q}\left(\epsilon \right) \geq \lambda_{1,p,q}^{+}\left(\epsilon \right),
\end{align*}
which means
\begin{align*}
\lambda_{1,p,q}\left(\epsilon \right) &\geq \frac{1}{\int_{\mathbb{S}^{m}_{+}}f_{\epsilon}^{\frac{m}{2}}|\bar{u}_{\epsilon}|^{\alpha +1}|\bar{v}_{\epsilon}|^{\beta +1}dv_{can}} \Big[ \frac{\alpha +1}{p}\int_{\mathbb{S}^{m}_{+}}f_{\epsilon}^{\frac{m-p}{2}}|\bar{u}_{\epsilon}^{\prime}|^{p} dv_{can}
+ \frac{\beta +1}{q} \int_{\mathbb{S}^{m}_{+}}f_{\epsilon}^{\frac{m-p}{2}}|\bar{v}_{\epsilon}^{\prime}|^{q} dv_{can} \Big].
\end{align*}
Now consider two functions $a_{\epsilon} \in W^{1,p}\left(M\right)$ and $c_{\epsilon} \in W^{1,q}\left(M\right)$ as
\begin{equation*}
a_{\epsilon} = \left\{
\begin{array}{lr}
\bar{u}_{\epsilon}   &  [0, \frac{\pi}{2}-\epsilon ] , \\
\bar{u}_{\epsilon}\left(\frac{\pi}{2}-\epsilon \right)  &  \left(\frac{\pi}{2}-\epsilon, \frac{\pi}{2}\right] ,
\end{array} \right.
\end{equation*}
and
\begin{equation*}
c_{\epsilon} = \left\{
\begin{array}{lr}
\bar{v}_{\epsilon}   &  [0, \frac{\pi}{2}-\epsilon ] , \\
\bar{v}_{\epsilon}\left(\frac{\pi}{2}-\epsilon \right)  &  \left(\frac{\pi}{2}-\epsilon, \frac{\pi}{2}\right] ,
\end{array} \right.
\end{equation*}
also let $b_{\epsilon} = \bar{u}_{\epsilon} - a_{\epsilon}$ and $d_{\epsilon} = \bar{v}_{\epsilon} - c_{\epsilon}$. Obviously, on $\left[0, \frac{pi}{2} - \epsilon\right]$ and $\left(\frac{\pi}{2} - \epsilon, \frac{\pi}{2}\right]$ we have $b_{\epsilon} = d_{\epsilon} =0$ and $a_{\epsilon} = c_{\epsilon} =0$ respectively. From above definitions we see
\begin{align*}
|\bar{u}_{\epsilon}^{\prime}|^{p} &= |a_{\epsilon}^{\prime}|^{p} + |b_{\epsilon}^{\prime}|^{p},\\
|\bar{v}_{\epsilon}^{\prime}|^{q} &= |c_{\epsilon}^{\prime}|^{q} + |d_{\epsilon}^{\prime}|^{q},\\
|\bar{u}_{\epsilon}|^{\alpha +1} &\leq 2^{\alpha} \left(|a_{\epsilon}|^{\alpha +1} + |b_{\epsilon}|^{\alpha +1}\right),\\
|\bar{v}_{\epsilon}|^{\beta +1} &\leq 2^{\beta}\left(|c_{\epsilon}|^{\beta +1} + |d_{\epsilon}|^{\beta +1}\right).
\end{align*}
And also from definition of $f_{\epsilon}\left(r\right)$ and substituting in formulae of $\lambda_{1,p,q}$ we get
\begin{align*}
\lambda_{1,p,q}\left(\epsilon \right) \geq \frac{2^{-\left(\alpha + \beta \right)}}{\mathcal{A}} &\Big[\epsilon^{-\frac{2p}{m}}\left(\frac{\alpha +1}{p}\int_{\mathbb{S}_{+}^{m}}|a_{\epsilon}^{\prime}|^{p} dv_{can} + \frac{\beta +1}{q}\int_{\mathbb{S}_{+}^{m}}|c_{\epsilon}^{\prime}|^{q} dv_{can} \right) \\
&+ \frac{\alpha +1}{p}\int_{\mathbb{S}_{+}^{m}}|b_{\epsilon}^{\prime}|^{p} dv_{can} + \frac{\beta +1}{q}\int_{\mathbb{S}_{+}^{m}}|d_{\epsilon}^{\prime}|^{q} dv_{can} \Big],
\end{align*}
where
\begin{align*}
\mathcal{A} &= \int_{\mathbb{S}_{+}^{m}}f_{\epsilon}^{\frac{m}{2}}|a_{\epsilon}|^{\alpha +1}|c_{\epsilon}|^{\beta +1} dv_{can} + \int_{\mathbb{S}_{+}^{m}}|a_{\epsilon}|^{\alpha +1}|d_{\epsilon}|^{\beta +1} dv_{can} + \int_{\mathbb{S}_{+}^{m}}|b_{\epsilon}|^{\alpha +1}|c_{\epsilon}|^{\beta +1} dv_{can} \\
&+ \int_{\mathbb{S}_{+}^{m}}|b_{\epsilon}|^{\alpha +1}|d_{\epsilon}|^{beta +1} dv_{can}.
\end{align*}
Now let
\begin{align*}
\mathcal{A}=1,
\end{align*}
so obviously,
\begin{align}\label{shesh}
\lambda_{1,p,q}\left(\epsilon \right) \geq 2^{-\left(\alpha +\beta \right)} &\Big[\epsilon^{-\frac{2p}{m}}\left(\frac{\alpha +1}{p}\int_{\mathbb{S}^{m}_{+}}|a_{\epsilon}^{\prime}|^{p} dv_{can} + \frac{\beta +1}{q}\int_{\mathbb{S}^{m}_{+}}|c_{\epsilon}^{\prime}|^{q} dv_{can} \right) \\
&+ \left(\frac{\alpha +1}{p}\int_{\mathbb{S}_{m}^{+}}|b_{\epsilon}^{\prime}|^{p} dv_{can} + \frac{\beta +1}{q}\int_{\mathbb{S}_{m}^{+}}|d_{\epsilon}^{\prime}|^{q} dv_{can}\right)\Big]. \nonumber
\end{align}
We consider two different cases, on the one hand,
\begin{align*}
\limsup_{\epsilon \rightarrow 0} \Big[\frac{\alpha +1}{p}\int_{\mathbb{S}^{m}_{+}}|a_{\epsilon}^{\prime}|^{p} dv_{can} + \frac{\beta +1}{q}\int_{\mathbb{S}^{m}_{+}}|c_{\epsilon}^{\prime}|^{q} dv_{can} \Big] >0,
\end{align*}
then
\begin{align*}
\lambda_{1,p,q}\left(\epsilon \right). \epsilon^{\frac{p}{m}} \geq 2^{-\left(\alpha + \beta \right)}\epsilon^{-\frac{p}{m}}\left(\frac{\alpha +1}{p}\int_{\mathbb{S}^{m}_{+}}|a_{\epsilon}^{\prime}|^{p} dv_{can} + \frac{\beta +1}{q}\int_{\mathbb{S}^{m}_{+}}|c_{\epsilon}^{\prime}|^{q} dv_{can}\right),
\end{align*}
which concludes that
\begin{align*}
\limsup_{\epsilon \rightarrow 0}\lambda_{1,p,q}\left(\epsilon \right).\epsilon^{\frac{p}{m}} = \infty.
\end{align*}
On the other hand,
\begin{align*}
\lim_{\epsilon \rightarrow 0} \Big[\frac{\alpha +1}{p}\int_{\mathbb{S}^{m}_{+}}|a_{\epsilon}^{\prime}|^{p} dv_{can} + \frac{\beta +1}{q}\int_{\mathbb{S}^{m}_{+}}|c_{\epsilon}^{\prime}|^{q} dv_{can}\Big] =0,
\end{align*}
then we choose the sequence $\epsilon_{n} \rightarrow 0$ in the case that $a_{\epsilon_{n}} + c_{\epsilon_{n}} \rightarrow a+c$ where $a$ and $c$ are real constants. Now since
\begin{align*}
\lim_{n \rightarrow \infty}\int_{\mathbb{S}^{m}_{+}}f_{\epsilon_{n}}^{\frac{m}{2}}|a_{\epsilon_{n}}|^{\alpha +1}|c_{\epsilon_{n}}|^{\beta +1} dv_{can} &= \lim_{n \rightarrow \infty}\int_{\mathbb{S}^{m}_{+}}f_{\epsilon_{n}}^{\frac{m}{2}}\left(|a_{\epsilon_{n}}|^{\alpha +1}|c_{\epsilon_{n}}|^{\beta +1} - |a|^{\alpha +1}|c|^{\beta +1}\right) dv_{can} \\
&+ \left(|a|^{\alpha +1}|c|^{\beta +1}\right)\lim_{n \rightarrow \infty}\int_{\mathbb{S}_{+}^{m}}f_{\epsilon_{n}}^{\frac{m}{2}}dv_{can} =0,
\end{align*}
and for $p,q >m$, $\lbrace f_{\epsilon_{n}} \rbrace$ is uniformly bounded, thus
\begin{align*}
\lim_{n \rightarrow \infty} \int_{S^{m}_{+}}f_{\epsilon_{n}}^{\frac{m}{2}} dv_{can} = 0.
\end{align*}
By substituting above formulaes in (\ref{shesh}) we get
\begin{align*}
\lambda_{1,p,q}\left(\epsilon \right) &\geq \frac{2^{-\left(\alpha + \beta \right)}}{\mathcal{B}}\Big[\frac{\alpha +1}{p}\int_{\mathbb{S}^{m}_{+}}|b_{\epsilon}^{\prime}|^{p} dv_{can} + \frac{\beta +1}{q}\int_{\mathbb{S}^{m}_{+}}|d_{\epsilon}^{\prime}|^{q} dv_{can} \Big] \\
&= \frac{2^{-\left(\alpha + \beta \right)}}{\int_{\frac{\pi}{2}- \epsilon_{n}}^{\frac{\pi}{2}}\mathcal{D} \sin r^{m-1} dr}\int_{\frac{\pi}{2}-\epsilon_{n}}^{\frac{\pi}{2}}\left(\frac{\alpha +1}{p}|b_{\epsilon}^{\prime}|^{p} + \frac{\beta +1}{q}|d_{\epsilon}^{\prime}|^{q}\right) \sin r^{m-1} dr \\
&\geq 2^{-\left(\alpha + \beta \right)}\left(\sin \left(\frac{\pi}{2} - \epsilon_{n}\right)\right)^{m-1}\frac{\frac{\alpha +1}{p}\int_{\frac{\pi}{2}-\epsilon_{n}}^{\frac{\pi}{2}}|b_{\epsilon}^{\prime}|^{p} dr + \frac{\beta +1}{q}\int_{\frac{\pi}{2}-\epsilon_{n}}^{\frac{\pi}{2}}|d_{\epsilon}^{\prime}|^{q} dr}{\int_{\frac{\pi}{2}-\epsilon_{n}}^{\frac{\pi}{2}} \mathcal{D} dr},
\end{align*}
where
\begin{align*}
\mathcal{B} &= \int_{\mathbb{S}^{m}_{+}}|a_{\epsilon}|^{\alpha +1}|d_{\epsilon}|^{\beta +1} dv_{can} + \int_{\mathbb{S}^{m}_{+}}|b_{\epsilon}|^{\alpha +1}|c_{\epsilon}|^{\beta +1} dv_{can} \\
&+ \int_{\mathbb{S}^{m}_{+}}|b_{\epsilon}|^{\alpha +1}|d_{\epsilon}|^{\beta +1} dv_{can},
\end{align*}
and
\begin{align*}
\mathcal{D} = |a_{\epsilon}|^{\alpha +1}|d_{\epsilon}|^{\beta +1} + |b_{\epsilon}|^{\alpha +1}|c_{\epsilon}|^{\beta +1} + |b_{\epsilon}|^{\alpha +1}|d_{\epsilon}|^{\beta +1}.
\end{align*}
Consider $\bar{a}_{\epsilon_{n}} \in W_{0}^{1,p}\left(-\epsilon, \epsilon\right)$ as
\begin{align*}
\bar{a}_{\epsilon_{n}}\left(x\right) = a_{\epsilon_{n}}\left(x + \frac{\pi}{2} - \epsilon_{n}\right).
\end{align*}
The similar way holds for $\bar{b}_{\epsilon_{n}} \in W_{0}^{1,p}\left(-\epsilon_{n}, \epsilon_{n}\right)$ and $\bar{c}_{\epsilon_{n}}, \bar{d}_{\epsilon_{n}} \in W_{0}^{1,q}\left(-\epsilon, \epsilon\right)$, and also these functions are even, so
\begin{align*}
\frac{\frac{\alpha +1}{p}\int_{\frac{\pi}{2}-\epsilon_{n}}^{\frac{\pi}{2}}|b_{\epsilon}^{\prime}|^{p} dr + \frac{\beta +1}{q}\int_{\frac{\pi}{2}-\epsilon_{n}}^{\frac{\pi}{2}}|d_{\epsilon}^{\prime}|^{q} dr}{\int_{\frac{\pi}{2}-\epsilon_{n}}^{\frac{\pi}{2}}\mathcal{D} dr} &= \frac{\frac{\alpha +1}{p}\int_{0}^{\epsilon_{n}}|b_{\epsilon}^{\prime}|^{p} dr + \frac{\beta +1}{q}\int_{0}^{\epsilon_{n}}|d_{\epsilon}^{\prime}|^{q} dr}{\int_{0}^{\epsilon_{n}} \mathcal{D} dr}\\
&= \frac{\frac{\alpha +1}{p}\int_{-\epsilon_{n}}^{\epsilon_{n}}|b_{\epsilon}^{\prime}|^{p} dr + \frac{\beta +1}{q}\int_{-\epsilon_{n}}^{\epsilon_{n}}|d_{\epsilon}^{\prime}|^{q} dr}{\int_{-\epsilon_{n}}^{\epsilon_{n}}\mathcal{D} dr}\\
&\geq \lambda_{1,p,q}\left(-\epsilon_{n}, \epsilon_{n}\right) \\
&= \epsilon_{n}^{-p}\lambda_{1,p,q}\left(-1, 1\right),
\end{align*}
and
\begin{align*}
\lambda_{1,p,q}\left(\epsilon \right) \geq 2^{-\left(\alpha + \beta \right)} . \epsilon_{n}^{-p}\left(\sin \left(\frac{\pi}{2} - \epsilon_{n}\right)\right)^{m-1} \lambda_{1,p,q}\left(-1, 1\right),
\end{align*}
which concludes finally
\begin{align*}
\limsup_{\epsilon \rightarrow 0} \lambda_{1,p,q}\left(\epsilon \right) . \epsilon^{\frac{p}{m}} = \infty.
\end{align*}
\end{proof}
For $\epsilon >0$, let $\tilde{f}_{\epsilon} \in C^{\infty}\left(\mathbb{S}^{m}\right)$ be a radial function suth that $\tilde{f}_{\epsilon} \leq f_{\epsilon}$. Also on $\left[\frac{\pi}{2} - \frac{\epsilon}{2}, \frac{\pi}{2} + \frac{\epsilon}{2}\right]$ we get
\begin{align*}
\tilde{f}_{\epsilon}\left(r\right) = f_{\epsilon}\left(r\right) =1,
\end{align*}
and
\begin{align*}
\tilde{f}_{\epsilon}\left(\pi - r\right) = \tilde{f}\left(r\right).
\end{align*}
Furthermore
\begin{align*}
{\rm vol}\left(\mathbb{S}^{m}, \tilde{f}_{\epsilon}can\right) = \int_{\mathbb{S}^{m}}\tilde{f}_{\epsilon}^{\frac{m}{2}} dv_{can} &= \int_{\mathbb{S}^{m-1}}\int_{-\frac{\pi}{2}}^{\frac{\pi}{2}}\tilde{f}_{\epsilon}^{\frac{m}{2}} \sin r^{m-1} dr dv_{can} \\
&> V. \int_{\frac{\pi}{2}-\frac{\epsilon}{2}}^{\frac{\pi}{2} + \frac{\epsilon}{2}} \sin r^{m-1} dr \\
&> \epsilon V\Big[\sin \left(\frac{\pi}{2} - \epsilon \right)\Big]^{m-1},
\end{align*}
where $V = {\rm vol}\left(\mathbb{S}^{m}, can\right)$. If $\tilde{u}_{\epsilon}$ and $\tilde{v}_{\epsilon}$ denote the eigenfunctions for $\lambda_{1,p,q}\left(\mathbb{S}^{n}, \tilde{f}_{\epsilon}can\right)$, and $\tilde{u}_{\epsilon}^{+}$, $\tilde{u}_{\epsilon}^{-}$, $\tilde{v}_{\epsilon}^{+}$, $\tilde{v}_{\epsilon}^{-}$ denote the positive and the negative parts of $\tilde{u}_{\epsilon}$ and $\tilde{v}_{\epsilon}$ respectively. For the $p$-Laplacian (\ref{yek}) it was proved before in \cite{ma2} that
\begin{align}\label{haft}
\lambda_{1,p}\left(\mathbb{S}^{m}, \tilde{f}_{\epsilon} can\right) = \frac{\int_{\mathbb{S}^{m}}|d\tilde{u}_{\epsilon}^{+}|^{p} \tilde{f}_{\epsilon}^{\frac{m-p}{2}}dv_{can}}{\int_{\mathbb{S}^{m}}|\tilde{u}_{\epsilon}^{+}|^{p}\tilde{f}_{\epsilon}^{\frac{m}{2}}dv_{can}} =  \frac{\int_{\mathbb{S}^{m}}|d \tilde{u}_{\epsilon}^{-}|^{p} \tilde{f}_{\epsilon}^{\frac{m-p}{2}}dv_{can}}{\int_{\mathbb{S}^{m}}|\tilde{u}_{\epsilon}^{-}|^{p}\tilde{f}_{\epsilon}^{\frac{m}{2}}dv_{can}}.
\end{align}
\begin{Cor}\label{coryek}
Let $p \geq q >m$ and $\lambda_{1,p,q}$ denotes the first eigenvalue for the $\left(p,q\right)$-Laplacian system (\ref{se}) then for $\lambda_{1,p,q}$ arbitrary large, there exists the Riemannian metric with volume one on $\mathbb{S}^{m}$ conformal to the standard metric $can$.
\end{Cor}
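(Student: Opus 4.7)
The plan is to take the smooth conformal representative $\tilde{g}_\epsilon := \tilde{f}_\epsilon \cdot can$ on $\mathbb{S}^m$ built just above, rescale it to unit volume, and then let $\epsilon \to 0$ along a sequence furnished by Theorem \ref{dot}. Concretely, I would put
\begin{align*}
\bar{g}_\epsilon := c_\epsilon^{2}\, \tilde{g}_\epsilon, \qquad c_\epsilon := {\rm vol}(\mathbb{S}^m, \tilde{g}_\epsilon)^{-1/m},
\end{align*}
so that $\bar{g}_\epsilon \in [can]$ and ${\rm vol}(\mathbb{S}^m, \bar{g}_\epsilon) = 1$. The volume lower bound ${\rm vol}(\mathbb{S}^m, \tilde{g}_\epsilon) > \epsilon V [\sin(\tfrac{\pi}{2}-\epsilon)]^{m-1}$ derived just before the corollary immediately yields $c_\epsilon \leq C\, \epsilon^{-1/m}$ for an absolute constant $C$, and in particular $c_\epsilon \geq 1$ for all sufficiently small $\epsilon$.

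Next I would track how the Rayleigh quotient defining $\lambda_{1,p,q}$ transforms under the homothety $g \mapsto c^2 g$: a direct change of variables gives
\begin{align*}
R_{c^{2} g}(u,v) = c^{-p}\, \tfrac{\alpha+1}{p}\, X_g(u,v) + c^{-q}\, \tfrac{\beta+1}{q}\, Y_g(u,v),
\end{align*}
where $X_g,Y_g$ are the two summands of $R_g(u,v)$. Since $c_\epsilon \geq 1$ and $p \geq q$ force $c_\epsilon^{-p}\leq c_\epsilon^{-q}$, factoring out the smaller coefficient gives
\begin{align*}
\lambda_{1,p,q}(\mathbb{S}^m,\bar{g}_\epsilon) \geq c_\epsilon^{-p}\, \lambda_{1,p,q}(\mathbb{S}^m,\tilde{g}_\epsilon).
\end{align*}
The remaining job is to compare $\lambda_{1,p,q}(\mathbb{S}^m,\tilde{g}_\epsilon)$ with the parametrized quantity $\lambda_{1,p,q}(\epsilon)$. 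Expanding the conformal Rayleigh quotient for $\tilde{g}_\epsilon$ against the background $can$ produces the weights $\tilde{f}_\epsilon^{(m-p)/2}$, $\tilde{f}_\epsilon^{(m-q)/2}$ and $\tilde{f}_\epsilon^{m/2}$ on the $u$-gradient, the $v$-gradient and the coupling term respectively; combining the pointwise inequalities $0 < \tilde{f}_\epsilon \leq f_\epsilon \leq 1$ with the sign information $(m-p)/2 \leq (m-q)/2 < 0 < m/2$ coming from $p\geq q>m$, I would argue term by term that the numerator of $R_{\tilde{g}_\epsilon}$ dominates, and the denominator is dominated by, the corresponding pieces of $R_\epsilon$, yielding $\lambda_{1,p,q}(\mathbb{S}^m,\tilde{g}_\epsilon) \geq \lambda_{1,p,q}(\epsilon)$.

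Stringing the inequalities together gives
\begin{align*}
\lambda_{1,p,q}(\mathbb{S}^m,\bar{g}_\epsilon) \geq c_\epsilon^{-p}\, \lambda_{1,p,q}(\epsilon) \geq C'\, \epsilon^{p/m}\, \lambda_{1,p,q}(\epsilon),
\end{align*}
whereupon Theorem \ref{dot} supplies a sequence $\epsilon_n \to 0$ along which the right-hand side blows up, producing unit-volume conformal metrics $\bar{g}_{\epsilon_n}$ on $\mathbb{S}^m$ with arbitrarily large first $(p,q)$-eigenvalue. The most delicate step is the weight comparison in the preceding paragraph: the definition of $\lambda_{1,p,q}(\epsilon)$ in the paper places $f_\epsilon^{(m-p)/2}$ on \emph{both} gradient terms, whereas the conformal $(p,q)$-Rayleigh quotient of $\tilde{g}_\epsilon$ naturally carries $\tilde{f}_\epsilon^{(m-q)/2}$ on the $v$-gradient, so matching these two integrals pointwise requires using $p\geq q$, $\tilde{f}_\epsilon \leq f_\epsilon \leq 1$, and the monotonicity of $t \mapsto t^{a}$ on $(0,1]$ in the exponent $a$, carefully ordered so that the resulting inequality goes in the required direction uniformly in $\epsilon$.
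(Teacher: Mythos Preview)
Your overall strategy coincides with the paper's: pass to the smooth conformal metric $\tilde g_\epsilon=\tilde f_\epsilon\,can$, compare its $(p,q)$-Rayleigh quotient with the auxiliary quantity $\lambda_{1,p,q}(\epsilon)$, rescale to unit volume, and invoke Theorem~\ref{dot}. Your handling of the homothety is in fact more explicit than the paper's (which tacitly treats $\lambda_{1,p,q}(g)\,{\rm vol}(g)^{p/m}$ as scale invariant despite the inhomogeneity coming from the $q$-term); your inequality $\lambda_{1,p,q}(\bar g_\epsilon)\ge c_\epsilon^{-p}\lambda_{1,p,q}(\tilde g_\epsilon)$ for $c_\epsilon\ge 1$ is the correct substitute.

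The step you flag as ``most delicate'', however, does not go through as stated. To get $\lambda_{1,p,q}(\tilde g_\epsilon)\ge\lambda_{1,p,q}(\epsilon)$ you need, on the $v$-gradient, the pointwise inequality $\tilde f_\epsilon^{(m-q)/2}\ge f_\epsilon^{(m-p)/2}$. This is \emph{not} a consequence of $0<\tilde f_\epsilon\le f_\epsilon\le 1$ and $p\ge q>m$: at any point where $\tilde f_\epsilon=f_\epsilon=:t\in(0,1)$ and $p>q$, one has $t^{(m-q)/2}<t^{(m-p)/2}$ because $(m-q)/2>(m-p)/2$ and $a\mapsto t^{a}$ is strictly decreasing on $(0,1)$. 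The paper's construction allows such points on the outer caps, where $f_\epsilon=\epsilon^{4p/(m(p-m))}<1$; there the ratio $\tilde f_\epsilon^{(m-q)/2}/f_\epsilon^{(m-p)/2}$ behaves like $\epsilon^{2p(p-q)/(m(p-m))}\to 0$, so no uniform constant rescues the comparison either. The paper hides the same difficulty by writing the conformal Rayleigh quotient of $\tilde f_\epsilon\,can$ with the exponent $(m-p)/2$ on \emph{both} gradient terms, which is not the correct conformal weight for the $v$-term. A genuine repair requires additional input: either choose $\tilde f_\epsilon$ small enough on the caps so that $\tilde f_\epsilon\le f_\epsilon^{(p-m)/(q-m)}$ there (which forces the desired pointwise inequality), or redefine $\lambda_{1,p,q}(\epsilon)$ with the weight $f_\epsilon^{(m-q)/2}$ on the $v$-gradient and rerun the proof of Theorem~\ref{dot} for that version.
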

\begin{proof}
By expanding (\ref{haft}) on the $\left(p,q\right)$-Laplacian system (\ref{se}) we have
\begin{align*}
\lambda_{1,p,q}\left(\mathbb{S}^{m}, \tilde{f}_{\epsilon} can\right) &= \frac{1}{\int_{\mathbb{S}^{m}}|\tilde{u}_{\epsilon}^{+}|^{\alpha +1}|\tilde{v}_{\epsilon}^{+}
|^{\beta +1} \tilde{f}_{\epsilon}^{\frac{m}{2}} dv_{can}} \Big[\frac{\alpha +1}{p}\int_{\mathbb{S}^{m}}|d\tilde{u}_{\epsilon}^{+}|^{p} \tilde{f}_{\epsilon}^{\frac{m-p}{2}} dv_{can} \\
&+ \frac{\beta +1}{q}\int_{\mathbb{S}^{m}}|d\tilde{v}_{\epsilon}^{+}|^{q} \tilde{f}_{\epsilon}^{\frac{m-p}{2}} dv_{can} \Big] \\
&= \frac{1}{\int_{\mathbb{S}^{m}}|\tilde{u}_{\epsilon}^{-}|^{\alpha +1}|\tilde{v}_{\epsilon}^{-}
|^{\beta +1} \tilde{f}_{\epsilon}^{\frac{m}{2}} dv_{can}} \Big[\frac{\alpha +1}{p}\int_{\mathbb{S}^{m}}|d \tilde{u}_{\epsilon}^{-}|^{p} \tilde{f}_{\epsilon}^{\frac{m-p}{2}} dv_{can} \\
&+ \frac{\beta +1}{q}\int_{\mathbb{S}^{m}}|d\tilde{v}_{\epsilon}^{-}|^{q} \tilde{f}_{\epsilon}^{\frac{m-p}{2}} dv_{can} \Big].
\end{align*}
Let $t \in \mathbb{R}$ such that
\begin{align*}
\tilde{u}_{\epsilon, t} = t\tilde{u}_{\epsilon}^{+} + \tilde{u}_{\epsilon}^{-},
\end{align*}
then
\begin{align*}
\lambda_{1,p,q}\left(\mathbb{S}^{m}, \tilde{f}_{\epsilon} can\right) &= \frac{1}{\int_{\mathbb{S}^{m}}|\tilde{u}_{\epsilon}|^{\alpha +1}|\tilde{v}_{\epsilon}
|^{\beta +1} \tilde{f}_{\epsilon}^{\frac{m}{2}} dv_{can}} \Big[\frac{\alpha +1}{p}\int_{\mathbb{S}^{m}}|d\tilde{u}_{\epsilon}|^{p} \tilde{f}_{\epsilon}^{\frac{m-p}{2}} dv_{can} \\
&+ \frac{\beta +1}{q}\int_{\mathbb{S}^{m}}|d\tilde{v}_{\epsilon}|^{q} \tilde{f}_{\epsilon}^{\frac{m-p}{2}} dv_{can} \Big] \\
&\geq \frac{1}{\int_{\mathbb{S}^{m}}|\tilde{u}_{\epsilon}|^{\alpha +1}|\tilde{v}_{\epsilon}
|^{\beta +1} f_{\epsilon}^{\frac{m}{2}} dv_{can}} \Big[\frac{\alpha +1}{p}\int_{\mathbb{S}^{m}}|d\tilde{u}_{\epsilon}|^{p} f_{\epsilon}^{\frac{m-p}{2}} dv_{can} \\
&+ \frac{\beta +1}{q}\int_{\mathbb{S}^{m}}|d\tilde{v}_{\epsilon}|^{q} f_{\epsilon}^{\frac{m-p}{2}} dv_{can} \Big] \\
&\geq \lambda_{1,p,q}\left(\epsilon \right).
\end{align*}
Above inequalities with the Theorem \ref{dot} and $p \geq q$ together give us
\begin{align*}
\limsup_{\epsilon \rightarrow 0} \lambda_{1,p,q}\left(\mathbb{S}^{m}, \tilde{f}_{\epsilon} can\right) {\rm vol}\left(\mathbb{S}^{m}, \tilde{f}_{\epsilon}can\right)^{\frac{p}{m}} \geq V^{\frac{p}{m}}.\limsup_{\epsilon \rightarrow 0}\lambda_{1,p,q}\left(\epsilon \right). \epsilon^{\frac{p}{m}} = \infty.
\end{align*}
Now set
\begin{align*}
h_{\epsilon} = {\rm vol}\left(\mathbb{S}^{m}, \tilde{f}_{\epsilon}can \right)^{-\frac{2}{m}}\tilde{f}_{\epsilon},
\end{align*}
then we get
\begin{align*}
{\rm vol}\left(S^{m}, h_{\epsilon}can\right) = 1,
\end{align*}
and
\begin{align*}
\limsup_{\epsilon \rightarrow 0}\lambda_{1,p,q}\left(\mathbb{S}^{m}, h_{\epsilon}can\right) = \infty.
\end{align*}
\end{proof}
\begin{Rem}
Someone may consider the situation $q <m<p$, in this case we just take the radial function $f_{\epsilon}: \mathbb{S}^{m} \rightarrow \mathbb{R}$ as
\begin{align*}
f_{\epsilon}\left(r\right) = \epsilon^{\frac{4q}{m\left(m-q\right)}}. \chi_{[0, \frac{\pi}{2}-\epsilon ] \cup [\frac{\pi}{2}+\epsilon, \pi ]}\left(r\right) + \chi_{\left(\frac{\pi}{2}-\epsilon, \frac{\pi}{2}+\epsilon \right)}\left(r\right),
\end{align*}
and then
\begin{align*}
R_{\epsilon}\left(u,v\right):=   \frac{1}{\int_{\mathbb{S}^{m-1}}f_{\epsilon}^{\frac{m}{2}}|u|^{\alpha +1}|v|^{\beta +1}dv_{can}} &\Big[\frac{\alpha +1}{p}\int_{\mathbb{S}^{m-1}}f_{\epsilon}^{\frac{q-m}{2}}|d u|^{p}dv_{can}\\
&+ \frac{\beta +1}{q}\int_{\mathbb{S}^{m-1}}f_{\epsilon}^{\frac{q-m}{2}}|d v|^{q}dv_{can} \Big],
\end{align*}
where
\begin{align*}
\lambda_{1,p,q}\left(\epsilon \right) = \inf_{u,v \neq 0} \Big\lbrace R_{\epsilon}\left(u,v\right)| \left(u,v\right) \in W_{0}^{1,p}\times W_{0}^{1,q} \setminus \lbrace 0 \rbrace \Big\rbrace.
\end{align*}
Now by the definition of $\bar{u}_{\epsilon}$ and $\bar{v}_{\epsilon}$ in the Theorem \ref{dot} and by H\"older's inequality we see
\begin{align*}
\int_{\mathbb{S}^{m}}f_{\epsilon}^{\frac{m}{2}}|\bar{u}_{\epsilon}|^{\alpha +1} dv_{can} = \int_{\mathbb{S}^{m}}f_{\epsilon}^{\frac{m}{2}} |u_{\epsilon}|^{\alpha +1} dv_{can},
\end{align*}
and
\begin{align*}
\int_{\mathbb{S}^{m}}f_{\epsilon}^{\frac{q-m}{2}}|\bar{u}^{\prime}|^{p} dv_{can} \leq \int_{\mathbb{S}^{m}}f_{\epsilon}^{\frac{q-m}{2}}|du|^{p} dv_{can},
\end{align*}
also the same way holds for $v$ and $q$. These together give us
\begin{align*}
\lambda_{1,p,q}\left(\epsilon \right) &\geq \frac{1}{\int_{\mathbb{S}^{m}}f_{\epsilon}^{\frac{m}{2}}|\bar{u}_{\epsilon}|^{\alpha +1}|\bar{v}_{\epsilon}|^{\beta +1}dv_{can}} \Big[ \frac{\alpha +1}{p}\int_{\mathbb{S}^{m}}f_{\epsilon}^{\frac{q-m}{2}}|\bar{u}_{\epsilon}^{\prime}|^{p} dv_{can}\\
&+ \frac{\beta +1}{q} \int_{\mathbb{S}^{m}}f_{\epsilon}^{\frac{q-m}{2}}|\bar{v}_{\epsilon}^{\prime}|^{q} dv_{can} \Big] \\
&\geq \min \lbrace \lambda_{1,p,q}^{+}, \lambda_{1,p,q}^{-} \rbrace.
\end{align*}
So by the same way as Theorem \ref{dot} we get
\begin{align*}
\limsup_{\epsilon \rightarrow 0} \lambda_{1,p,q}\left(\epsilon \right). \epsilon^{\frac{q}{m}} = \infty.
\end{align*}
It means that the same context as Corollary \ref{coryek} holds in the case $q<m<p$.
\end{Rem}
\section{The $\left(p,q\right)$-Laplacian equation, a new characterization}
One may consider the $\left(p,q\right)$-Laplacian equation as
\begin{align*}
\Delta_{p}u + \Delta_{q}u = div\left(\left(|\nabla u|^{p-2} + |\nabla u|^{q-2}\right)\nabla u \right),
\end{align*}
for $1<q<p<\infty$ and also
\begin{align*}
u \in W_{0}^{1,p}\left(M\right) \cap W_{0}^{1,q}\left(M\right).
\end{align*}
Also it can be written as
\begin{align}\label{hasht}
-\Delta_{p}u - \Delta_{q}u = \lambda |u|^{p-2}u,
\end{align}
where for arbitrary $v \in W_{0}^{1,p} \cap W_{0}^{1,q}$, it is equivalent to
\begin{align*}
&\int_{M} |\nabla u|^{p-2} \nabla u \nabla v d\mu + \int_{M} |\nabla u|^{q-2} \nabla u \nabla v dv \\
&=\lambda \int_{M} |u|^{p-2} uv dv,
\end{align*}
and $\lambda $ is called its eigenvalue associated to the eigenvector $u$. Similar to the previous one, in this case the first Dirichlet eigenvalue of the $\left(p,q\right)$-Laplacian equation (\ref{hasht}) is defined as
\begin{align*}
\lambda_{1,p,q}^{D}\left(M\right) = \inf_{u \neq 0} \Big\lbrace \frac{1}{\int_{M}|u|^{p}dv}\left(\int_{M}|\nabla u|^{p}dv + \int_{M}|\nabla u|^{q}dv\right)| u \in W_{0}^{1,p}\left(M\right) \cap W_{0}^{1,q}\left(M\right) \setminus \lbrace 0 \rbrace \Big\rbrace.
\end{align*}
\begin{Th}\label{set}
Consider $M$ as an $m$-dimensional compact manifold and $1<q<p\leq m$. If $\lambda_{1,p,q}^{D}$ denotes the first eigenvalue of the $\left(p,q\right)$-Laplacian equation (\ref{hasht}), then
\begin{align*}
\lambda_{1,p,q}^{D}\left(M\right) \leq \left(n+1\right)^{|\frac{p}{2}-1|} \Big[m^{\frac{p}{2}}\left(V_{c}^{n}\left(M,[g]\right)\right)^{\frac{p}{m}}
+ m^{\frac{q}{m}}\left(V_{c}^{n}\left(M,[g]\right)\right)^{\frac{q}{m}}\Big].
\end{align*}
\end{Th}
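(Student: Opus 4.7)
The plan is to imitate the Matei-style rearrangement argument used for Theorem~\ref{yet} (and originally for Theorem 1.1), adapted to the single-function Rayleigh quotient that defines $\lambda_{1,p,q}^{D}$. Fix a conformal immersion $\phi\in I_{n}(M,[g])$ and an element $\gamma\in G(n)$, and write $\psi=\gamma\circ\phi$ with components $\psi_{1},\dots,\psi_{n+1}$ satisfying $\sum_{i}\psi_{i}^{2}\equiv 1$. For each $i$, the decreasing rearrangement $\check{\psi}_{i}$ is admissible in the variational definition of $\lambda_{1,p,q}^{D}$: by the P\'olya--Szeg\H{o} principle $\int_{M}|d\check{\psi}_{i}|^{s}\,dv\leq\int_{M}|d\psi_{i}|^{s}\,dv$ for $s=p,q$, and equimeasurability preserves the $L^{p}$-norm. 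This gives
\[
\lambda_{1,p,q}^{D}(M)\int_{M}|\check{\psi}_{i}|^{p}\,dv \;\leq\; \int_{M}|d\check{\psi}_{i}|^{p}\,dv+\int_{M}|d\check{\psi}_{i}|^{q}\,dv.
\]

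Next I would sum over $i=1,\dots,n+1$ and pass to pointwise inequalities. The identity $\sum_{i}\psi_{i}^{2}=1$ combined with the power-mean inequality yields $\sum_{i}|\psi_{i}|^{p}\geq(n+1)^{1-p/2}$ for $p\geq 2$, while $|\psi_{i}|\leq 1$ gives $\sum_{i}|\psi_{i}|^{p}\geq\sum_{i}\psi_{i}^{2}=1$ for $p<2$. Dually, from $\sum_{i}|d\psi_{i}|^{2}=|d\psi|^{2}$ one obtains $\sum_{i}|d\psi_{i}|^{s}\leq|d\psi|^{s}$ for $s\geq 2$ (since $x\mapsto x^{s/2}$ is super-additive on $[0,\infty)$) and $\sum_{i}|d\psi_{i}|^{s}\leq(n+1)^{1-s/2}|d\psi|^{s}$ for $s<2$ (Jensen applied to the concave map $x\mapsto x^{s/2}$). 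Packaging these estimates case by case, the various $(n+1)$-factors collapse into the single coefficient $(n+1)^{|p/2-1|}$, with the hypothesis $p\geq q$ used to dominate the $q$-side exponent by the $p$-side one.

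After this step I would normalise so that $\mathrm{vol}(M,g)=1$ by a conformal rescaling within $[g]$. Since $p,q\leq m$, H\"older's inequality yields $\int_{M}|d\psi|^{s}\,dv\leq\bigl(\int_{M}|d\psi|^{m}\,dv\bigr)^{s/m}$ for $s\in\{p,q\}$, and the conformal-immersion identity $(\gamma\circ\phi)^{*}\mathrm{can}=m^{-1}|d\psi|^{2}\,g$ gives $\int_{M}|d\psi|^{m}\,dv=m^{m/2}\,\mathrm{vol}(M,(\gamma\circ\phi)^{*}\mathrm{can})$. Chaining these inequalities yields
\[
\int_{M}|d\psi|^{s}\,dv\;\leq\; m^{s/2}\,\mathrm{vol}\bigl(M,(\gamma\circ\phi)^{*}\mathrm{can}\bigr)^{s/m},
\]
for both $s=p$ and $s=q$. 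Replacing the conformal volume by $\sup_{\gamma\in G(n)}\mathrm{vol}(M,(\gamma\circ\phi)^{*}\mathrm{can})$ and then taking $\inf_{\phi}$ (justified because the right-hand side is monotone in the conformal volume) delivers the bound claimed in the theorem.

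The main obstacle I foresee is the uniform bookkeeping of the $(n+1)$-exponents. Across the sub-cases $p,q\geq 2$, $\,q<2\leq p$, and $1<q<p<2$, the convex and concave estimates produce a priori different powers of $(n+1)$; showing that all of them are absorbed by $(n+1)^{|p/2-1|}$---and in particular that the $q$-contribution never beats the $p$-contribution---is the delicate part, and it is precisely where the ordering $p\geq q$ enters. Once that is verified, what remains is a routine replay of Matei's single $p$-Laplacian argument from \cite{ma2} applied to each of the two Dirichlet energies separately.
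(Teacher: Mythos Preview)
Your proposal is essentially the paper's own argument: the paper isolates the component inequality as Lemma~\ref{lemdo} (summing the variational inequality over the coordinates $\psi_i$ of $\psi=\gamma\circ\phi$, then using the pointwise power-mean estimates on $\sum_i|\psi_i|^{p}$ and $\sum_i|d\psi_i|^{s}$), and then finishes exactly as you describe via H\"older and the conformal-immersion identity $(\gamma\circ\phi)^{*}\mathrm{can}=m^{-1}|d\psi|^{2}g$. The only divergence is that the paper works directly with the $\psi_i$ and never passes to rearrangements---there is no Chebyshev step in the single-function quotient, so your P\'olya--Szeg\H{o}/equimeasurability detour is harmless but superfluous.
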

Before giving the proof for this theorem we consider the following lemma.
\begin{Lemma}\label{lemdo}
Consider $\phi$ as same as Lemma \ref{lemyek}. If $\lambda_{1,p,q}^{D}$ denotes the first eigenvalue of the $\left(p,q\right)$-Laplacian equation (\ref{hasht}), then
\begin{align*}
\lambda_{1,p,q}^{D}\left(M\right) \leq \left(n+1\right)^{|1-\frac{p}{2}|}\Big[\int_{M}|d \psi |^{p} dv + \int_{M}|d \psi |^{q} dv \Big],
\end{align*}
where $\psi = \gamma \circ \phi$ and $\gamma \in G\left(n\right)$.
\end{Lemma}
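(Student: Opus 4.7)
The plan is to follow the template of the proof of Lemma~\ref{lemyek}, adapted to the scalar Rayleigh quotient for equation~(\ref{hasht}) in place of the coupled system. The key idea is again to use the $n+1$ ambient coordinates of $\mathbb{S}^n$, conformally normalized by some $\gamma\in G(n)$, as simultaneous test functions, sum the resulting $(n+1)$ inequalities, and then use the identity $\sum_{i=1}^{n+1}|\psi_i|^2=1$ to convert pointwise sums into global geometric quantities $|d\psi|^p$ and $|d\psi|^q$. First I would invoke a Hersch-type selection argument: given the smooth map $\phi:(M,g)\to(\mathbb{S}^n,can)$ with zero-measure level sets, there exists $\gamma\in G(n)$ such that the decreasing rearrangements $\check{\psi}_i$ of the components $\psi_i:=(\gamma\circ\phi)_i$, $1\leq i\leq n+1$, are admissible (nonzero) Dirichlet test functions for $\lambda_{1,p,q}^D$.

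Testing the Rayleigh quotient of~(\ref{hasht}) against each $\check{\psi}_i$ yields
\begin{align*}
\lambda_{1,p,q}^D(M)\int_M|\check{\psi}_i|^p\,dv \leq \int_M|d\check{\psi}_i|^p\,dv+\int_M|d\check{\psi}_i|^q\,dv,
\end{align*}
and summing over $i$ gives an upper bound on $\lambda_{1,p,q}^D(M)$ in terms of $\sum_i\int_M|d\check{\psi}_i|^p$, $\sum_i\int_M|d\check{\psi}_i|^q$, and $\sum_i\int_M|\check{\psi}_i|^p$. The classical rearrangement inequalities $\int|d\check{\psi}_i|^r\leq\int|d\psi_i|^r$ and $\int|\check{\psi}_i|^p=\int|\psi_i|^p$ then let me pass back to the original components. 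The third step is the convexity/concavity case split mirroring Lemma~\ref{lemyek}: if $p\geq 2$, convexity of $x\mapsto x^{p/2}$ gives $\sum_i|d\psi_i|^p\leq|d\psi|^p$ with no loss, while the power-mean inequality against $\sum_i|\psi_i|^2=1$ forces $\sum_i|\psi_i|^p\geq(n+1)^{1-p/2}$, producing the factor $(n+1)^{p/2-1}$; if $1<p<2$, concavity and Jensen give $\sum_i|d\psi_i|^p\leq(n+1)^{1-p/2}|d\psi|^p$ while $\sum_i|\psi_i|^p\geq 1$, producing the factor $(n+1)^{1-p/2}$. Both cases are uniformly bounded by $(n+1)^{|1-p/2|}$, and the $q$-term is handled by the same dichotomy applied to $x\mapsto x^{q/2}$; under the standing assumption $q<p$ and the pointwise bound $|\psi_i|\leq 1$, its factor is absorbed into $(n+1)^{|1-p/2|}$.

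The principal obstacle I expect is verifying the Hersch selection: one must carry out a topological degree argument to show that $G(n)$ has enough flexibility to center $\gamma\circ\phi$ so that each rearranged component $\check{\psi}_i$ is admissible. A secondary technical point is reconciling the $q$-term's natural exponent $(n+1)^{|1-q/2|}$ with the uniform $(n+1)^{|1-p/2|}$ stated in the lemma, which requires careful use of $q<p$ and of the bound $|\psi_i|\leq 1$ so that the lower $p$-power dominates the corresponding $q$-power in the denominator.
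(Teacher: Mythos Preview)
Your proposal is correct and follows essentially the same route as the paper's proof: test the Rayleigh quotient of~(\ref{hasht}) against each coordinate $\psi_i$, sum over $i$, and then use the convexity/concavity of $x\mapsto x^{p/2}$ together with $\sum_i|\psi_i|^2=1$ to bound numerator and denominator, splitting into the cases $p\geq 2$ and $1<p<2$. The one superfluous ingredient in your plan is the passage to decreasing rearrangements $\check{\psi}_i$---that device was needed in Lemma~\ref{lemyek} only to apply Chebyshev's inequality to the product $|\check{\psi}_i|^{\alpha+1}|\check{\eta}_i|^{\beta+1}$, whereas here the denominator is a single $L^p$-norm and the paper simply works with the raw components $\psi_i$.
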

\begin{proof}
From the definition of $\lambda_{1,p,q}^{D}\left(M\right)$ we see
\begin{align*}
\lambda_{1,p,q}^{D}\left(M\right) \leq \frac{\int_{M}|d\psi_{i}|^{p} dv + \int_{M}|d \psi_{i}|^{q} dv}{\int_{M}|\psi_{i}|^{p} dv},
\end{align*}
thus
\begin{align*}
\lambda_{1,p,q}^{D}\left(M\right) \leq \sum_{i=1}^{n+1}\frac{\int_{M} |d \psi_{i}|^{p} dv + \int_{M}|d \psi_{i}|^{q} dv}{\int_{M}|\psi_{i}|^{p} dv}.
\end{align*}
First, let $p\geq q \geq 2$, then
\begin{align*}
\sum_{i=1}^{n+1}|d \psi_{i}|^{p} &= \sum_{i=1}^{n+1}\left(|d \psi_{i}|^{2}\right)^{\frac{p}{2}} \leq \left(\sum_{i=1}^{n+1}|d \psi_{i}|^{2}\right)^{\frac{p}{2}} = |d\psi |^{p}.
\end{align*}
Also in the similar way for $q$ we have
\begin{align*}
\sum_{i=1}^{n+1}|d \psi_{i}|^{q} \leq |d \psi |^{q}.
\end{align*}
Since $\sum_{i=1}^{n+1}|\psi_{1}|^{2} =1$ and the map $x \rightarrow x^{\frac{p}{2}}$ is concave we get
\begin{align*}
\sum_{i=1}^{n+1}|\psi_{i}|^{p} \geq \left(n+1\right)^{1-\frac{p}{2}}\left(\sum_{i=1}^{n+1}|\psi_{i}|^{2}\right)^{\frac{p}{2}} = \left(n+1\right)^{1-\frac{p}{2}},
\end{align*}
and
\begin{align*}
\lambda_{1,p,q}^{D}\left(M\right) \leq \left(n+1\right)^{\frac{p}{2}-1}\Big[\int_{M}|d \psi |^{p} dv + \int_{M}|d\psi |^{q} dv \Big].
\end{align*}
Now let $1<q\leq p<2$, since $|\psi_{i}|<1$ and also $|\psi_{i}|^{2} \leq |\psi_{i}|^{p}$ and
\begin{align*}
1 = {\rm vol}\left(M,g\right) &= \int_{M} \sum_{i=1}^{n+1}|\psi_{i}|^{2} dv
\leq \int_{M} \sum_{i=1}^{n+1}|\psi_{i}|^{p} dv,
\end{align*}
we conclude that
\begin{align*}
\sum_{i=1}^{n+1}|d \psi_{i}|^{p} = \sum_{i=1}^{n+1}\left(|d \psi_{i}|^{2}\right)^{\frac{p}{2}} &\leq \left(n+1\right)^{1-\frac{p}{2}}\left(\sum_{i=1}^{n+1}|d \psi_{i}|^{2}\right)^{\frac{p}{2}}
=\left(n+1\right)^{1-\frac{p}{2}}|d \psi |^{p},
\end{align*}
similarly,
\begin{align*}
\sum_{i=1}^{n+1}|d \psi_{i}|^{q} = \sum_{i=1}^{n+1}\left(|d \psi_{i}|^{2}\right)^{\frac{q}{2}} &\leq \left(n+1\right)^{1-\frac{p}{2}}\left(\sum_{i=1}^{n+1}|d \psi_{i}|^{2}\right)^{\frac{q}{2}}
=\left(n+1\right)^{1-\frac{p}{2}}|d\psi |^{q},
\end{align*}
which finally give us
\begin{align*}
\lambda_{1,p,q}^{D}\left(M\right) \leq \left(n+1\right)^{|1-\frac{p}{2}|}\Big[\int_{M}|d \psi |^{p} dv + \int_{M}|d \psi |^{q} dv \Big].
\end{align*}
\end{proof}
\begin{proof}[{\bf Proof of Theorem \ref{set}}]
Consider $\phi: \left(M,g\right) \rightarrow \left(\mathbb{S}^{m}, can\right)$ as a conformal immersion. From Lemma \ref{lemdo} there exists $\gamma \in G\left(n\right)$ such that
\begin{align*}
\lambda_{1,p,q}^{D}\left(M\right) \leq \left(n+1\right)^{|\frac{p}{2}-1|}\Big[\int_{M}|d\psi |^{p} dv + \int_{M}|d \psi |^{q} dv \Big].
\end{align*}
By H\"older's inequality we have
\begin{align*}
\int_{M}|d\psi |^{p} dv \leq \left(\int_{M}|d \psi |^{m} dv \right)^{\frac{p}{m}},
\end{align*}
also the similar context holds for $q$. Since $\gamma \circ \phi: \left(M,g\right) \rightarrow \left(\mathbb{S}^{m}, can\right)$ is a conformal immersion and
\begin{align*}
\left(\gamma \circ \phi \right)^{*}can = \frac{|d\left(\gamma \circ \phi \right)|^{2}}{m}g,
\end{align*}
we have
\begin{align*}
\int_{M}|d \left(\gamma \circ \phi \right)|^{p} dv &= m^{\frac{p}{2}}{\rm vol}\left(M, \left(\gamma \circ \phi \right)^{*}can\right) \\
&\leq m^{\frac{p}{2}}\sup_{\gamma \in G\left(n\right)}{\rm vol}\left(M, \left(\gamma \circ \phi \right)^{*}can\right),
\end{align*}
also the similar context holds for $q$. Therefore, these together and by taking $inf$ respect to $\phi$ we find that
\begin{align*}
\lambda_{1,p,q}^{D}\left(M\right) \leq \left(n+1\right)^{|\frac{p}{2}-1|} \Big[m^{\frac{p}{2}}\left(V_{c}^{n}\left(M,[g]\right)\right)^{\frac{p}{m}}
+ m^{\frac{q}{m}}\left(V_{c}^{n}\left(M,[g]\right)\right)^{\frac{q}{m}}\Big].
\end{align*}
\end{proof}
\begin{Rem}
It seems clear that under consideration $p \geq q$, the $\left(p,q\right)$-Laplacian equation (\ref{hasht}) turns into the known $p$-Laplacian system (\ref{yek}) which was studied extensively in \cite{ma2}. So by the similar way of Matei \cite{ma2} and Theorem \ref{dot}, for an $m$-dimensional compact manifold $M$ and $p\geq q>m$ we get
\begin{align*}
\limsup_{\epsilon \rightarrow 0}\lambda_{1,p,q}^{D}\left(\epsilon \right). \epsilon^{\frac{p}{m}} = \infty.
\end{align*}
\end{Rem}


\begin{thebibliography}{99}
\bibitem{az} S. Azami, \textit{The first eigenvalue of some $\left(p,q\right)$-Laplacian and geometric estimate}, Commun. \linebreak Korean Math. Soc., 33 (2018), 317-323.
\bibitem{be} A. Besenyei, \textit{Picard's weighty proof of Chebyshev's sum inequality}, MAA Mathematics Magazine. Vol 88, 2015.
\bibitem{brown} K. Brown and Y. Zhang, \textit{On the system of reaction-diffusion equations describing a population with two age groups}, J. Math. Anal. Appl. 282, (2003), 444-452.
\bibitem{choi} Y. Choi, Z. Huan and R. Lui, \textit{Global existence of solutions of a strongly coupled quasilinear parabolic system with application to electrochemistry}, J. Diff. Equ. 194, (2003), 406-432.
\bibitem{con} A. Constantin, J. Escher and Z. Yin, \textit{Global solutions for quasilinear parabolic systems}, J. Diff. Equ. 197, (2004), 73-84.
\bibitem{dan} E. Dancer and Y. Du, \textit{Effects of certain degeneracies in the predator-prey model}, SIAM J. Math. Anal. 34, (2002), 292-314.
\bibitem{el} A. El Soufi and S. Ilias, \textit{Immersion minimales, premi\' er valeur propre du Laplacien et volume conforme}, Math. Ann., 275 (1986), 257-267.
\bibitem{ga} M. Gaffney, \textit{A special Stokes's theorem for complete Riemannian manifolds}, Ann. Math., 60 (1) (1954), 140-145.
\bibitem{ga1} M. Gaffney, \textit{The heat equation method of Milgram and Rosenbeloom for open Riemannian manifolds}, Ann. Math., 60 (3) (1954), 458-466.
\bibitem{ha} M. Habibi Vosta Kolaei and S. Azami, \textit{Geometric estimates of the first eigenvalue of $\left(p,q\right)$-elliptic quasilinear system under integral curvature condition}, J. Part. Diff. Eq., 34(2021), 348-368.
\bibitem{li} P. Li and Y. Yau, \textit{A new conformal invariant and it's application to the Wilmore conjecture and the first eigenvalue of compact surfaces}, Invent. Math., 69(1982), 269-291.
\bibitem{ma1} J. Mao, \textit{A class of rotationally symmetric quantum layers of dimention $4$}, J. Math. Anal. Appl., 397 (2) (2013), 791-799.
\bibitem{ma} A. Matei, \textit{First eigenvalue for the $p$-Laplace operator}, Nonlinear Anal. Ser. A 39 (8) (2000), 1051-1068.
\bibitem{ma2} A. Matei, \textit{Conformal bounds for the first eigenvalue of the $p$-Laplacian}, Nonlinear Anal. 80(2013), 88-95.
\bibitem{na} A. Naber, D. Valtorta, \textit{Sharp estimates on the first eigenvalue of the $p$-Laplacian with negative Ricci lower bound}, Math. Z. 277 (3-4) (2014), 867-891.
\bibitem{val} D. Valtora, \textit{Sharp estimate on the first eigenvalue of the $p$-Laplacian}, Nonlinear Anal. 75 (13) (2012), 4974-4994.
\bibitem{zog} N. Zographopoulos, \textit{On the principal eigenvalue of degenerate quasilinear elliptic systems}, Math. Nachr. 281, (2008), 1351-1365.
\end{thebibliography}
\end{document}